\documentclass[11pt,draft]{article}
\usepackage{amsmath,amscd}
\usepackage{amssymb,latexsym,amsthm}
\usepackage{color}
\usepackage{amssymb}
\usepackage{color}
\usepackage{amsmath,amsthm,amscd}
\usepackage[latin1]{inputenc}
\usepackage{lscape}
\usepackage{fancyhdr}
\usepackage{amsfonts}
\usepackage{pb-diagram}

\numberwithin{equation}{section}
\newtheorem{theorem}{Theorem}[section]

\newtheorem{proposition}[theorem]{Proposition}

\newtheorem{corollary}[theorem]{Corollary}

\theoremstyle{definition}
\newtheorem{example}[theorem]{Example}
\newtheorem{definition}[theorem]{Definition}
\newtheorem{examples}[theorem]{Examples}

\newtheorem{remark}[theorem]{Remark}

\hoffset-0.3in
\voffset-1.3cm \setlength{\oddsidemargin}{9mm}
\setlength{\textheight}{21.3cm}\setlength{\textwidth}{15.8cm}

\title{\textbf{Enveloping Algebra and Skew Calabi-Yau algebras\\ over Skew Poincar\'e-Birkhoff-Witt Extensions}}

\author{Armando Reyes\footnote{Departamento de Matem\'aticas. e-mail: mareyesv@unal.edu.co}\\ Universidad Nacional de
Colombia, Bogot\'a \\ H\'ector Su\'arez\footnote{Escuela de Matem\'aticas y Estad\'istica. e-mail: hector.suarez@uptc.edu.co}\\ Universidad Pedag\'ogica y Tecnol\'ogica de Colombia, Tunja}
\date{}
\begin{document}
\maketitle

\begin{abstract}
\noindent In this paper we show that the tensor product of skew PBW extensions is a skew PBW extension. We also characterize the enveloping algebra of a skew PBW extension. Finally, we establish sufficient conditions to guarantee the property of being skew Calabi-Yau algebra over skew PBW extensions.

\bigskip

\noindent \textit{Key words and phrases.} Enveloping algebra, skew PBW extensions, skew Calabi-Yau algebra.

\bigskip

\noindent 2010 \textit{Mathematics Subject Classification.} 16W50, 16W70, 16S36, 13N10.
\bigskip

\end{abstract}

\section{Introduction}
Let $k$ be a commutative ring and $B$ an associative $k$-algebra. By definition, the enveloping algebra of $B$ is the tensor product $B^e =B\otimes_k B^{\rm op}$ of $B$ with its opposite algebra $B^{\rm op}$. Bimodules over $B$ are
essentially the same as modules over the enveloping algebra of $B$, so
in particular $B$ can be considered as an $B^e$-module. Since that the opposite algebra of a Koszul algebra is also a Koszul algebra, and the tensor product of Koszul algebras is also a Koszul algebra, and having in mind that the authors have studied the property of being Koszul for skew Poincar\'e-Birkhoff-Witt (PBW for short) extensions (see  \cite{SuarezLezamaReyes2015},  \cite{SuarezReyes2017}, \cite{Suarez},  and \cite{SuarezReyes2017c}), in this paper we are interested in the characterization of the enveloping algebra of skew PBW extensions. These non-commutative rings of polynomial type were introduced in \cite{LezamaGallego}, and they are defined by a ring and a set of variables with relations
between them. Skew PBW extensions include rings and algebras coming from mathematical physics such PBW
extensions, group rings of polycyclic-by-finite groups, Ore algebras, operator algebras, diffusion
algebras, some quantum algebras, quadratic algebras in three variables, some 3-dimensional skew polynomial algebras, some quantum
groups, some types of Auslander-Gorenstein rings, some Calabi-Yau algebras, some quantum universal enveloping algebras, and others. A detailed list of examples can be consulted in \cite{ReyesPhD} and \cite{LezamaReyes2014}. Several ring, module and homological properties of these extensions have been studied (see for example \cite{ReyesPhD}, \cite{Reyes2013}, \cite{LezamaReyes2014}, \cite{Reyes2014}, \cite{Reyes2014UIS}, \cite{LezamaAcostaReyes2015},  \cite{Reyes2015}, \cite{ReyesSuarez2016a}, \cite{ReyesSuarez2016b}, \cite{ReyesSuarez2016c}, \cite{ReyesSuarez2017}, \cite{SuarezReyes2017}, \cite{Suarez},  \cite{SuarezReyes2017c}, and others). Besides our interest, it is important to say that the concept of enveloping algebra is of great importance in the research of another concepts in physics and mathematics (for instance, Calabi-Yau algebras  \cite{Wu2011}, \cite{Reyes}, \cite{Zhu2016}, see also Section \ref{application}).\\

The paper is organized as follows: In Section \ref{definitionexamplesspbw} we establish the necessary results about skew PBW extensions for the rest of the paper. Next, in Section \ref{tensorproduct} we establish some results about tensor product of skew PBW extensions. Finally, in Section \ref{envelopingalgebra}. Throughout the paper, the word ring means a ring not necessarily commutative with unity. The symbols $k$ and $\Bbbk$ will denote a commutative ring and a field, respectively.
\section{Skew PBW extensions}\label{definitionexamplesspbw}
In this section we recall the definition of skew PBW extension and present some of their properties. The proofs of these properties can be found in \cite{LezamaReyes2014}.
\begin{definition}[\cite{LezamaGallego}, Definition 1]\label{def.pbwextension}
Let $R$ and $A$ be rings. We say that $A$ \textit{is a skew PBW extension over} $R$ (also called a \textit{$\sigma$-PBW  extension of} $R$), if the following conditions hold:
\begin{enumerate}
\item[\rm (i)]$R\subseteq A$;
\item[\rm (ii)]there exist elements $x_1,\dots ,x_n\in A$ such that $A$ is a left free $R$-module, with basis the basic elements ${\rm Mon}(A):= \{x^{\alpha}=x_1^{\alpha_1}\cdots
x_n^{\alpha_n}\mid \alpha=(\alpha_1,\dots ,\alpha_n)\in
\mathbb{N}^n\}$.
\item[\rm (iii)]For each $1\leq i\leq n$ and any $r\in R\ \backslash\ \{0\}$, there exists an element $c_{i,r}\in R\ \backslash\ \{0\}$ such that $x_ir-c_{i,r}x_i\in R$.
\item[\rm (iv)]For any elements $1\leq i,j\leq n$,  there exists $c_{i,j}\in R\ \backslash\ \{0\}$ such that $x_jx_i-c_{i,j}x_ix_j\in R+Rx_1+\cdots +Rx_n$.
\end{enumerate}
Under these conditions, we will write $A:=\sigma(R)\langle
x_1,\dots,x_n\rangle$.
\end{definition}
\begin{remark}[\cite{LezamaGallego}, Remark 2]\label{notesondefsigampbw}
\begin{enumerate}
\item [\rm (i)] Since ${\rm Mon}(A)$ is a left $R$-basis of $A$, the elements $c_{i,r}$ and $c_{i,j}$ in Definition \ref{def.pbwextension} are unique.
\item [\rm (ii)] In Definition \ref{def.pbwextension} (iv), $c_{i,i}=1$. This follows from $x_i^2-c_{i,i}x_i^2=s_0+s_1x_1+\cdots+s_nx_n$, with $s_i\in R$, which implies $1-c_{i,i}=0=s_i$.
\item [\rm (iii)] Let $i<j$. By Definition \ref{def.pbwextension} (iv),   there exist elements $c_{j,i},c_{i,j}\in R$ such that $x_ix_j-c_{j,i}x_jx_i\in R+Rx_1+\cdots +Rx_n$ and $x_jx_i-c_{i,j}x_ix_j\in R+Rx_1+\cdots+Rx_n$, and hence $1=c_{j,i}c_{i,j}$, that is, for each $1\leq i<j\leq n$, $c_{i,j}$ has a left inverse and $c_{j,i}$ has a right inverse. In general, the elements $c_{i,j}$ are not two sided invertible. For instance, $x_1x_2=c_{2,1}x_2x_1+p=c_{21}(c_{1,2}x_1x_2+q)+p$, where $p,q\in R+Rx_1+\dotsb + Rx_n$, so $1=c_{2,1}c_{1,2}$, since $x_1x_2$ is a basic element of ${\rm Mon}(A)$. Now, $x_2x_1=c_{1,2}x_1x_2+q=c_{1,2}(c_{2,1}x_2x_1+p)+q$, but we cannot conclude that $c_{12}c_{21}=1$ because $x_2x_1$ is not a basic element of ${\rm Mon}(A)$ (we recall that ${\rm Mon}(A)$ consists of the standard monomials).\
\item [\rm (iv)] Every element $f\in A\ \backslash\ \{0\}$ has a unique representation in the form $f=c_0 + c_1X_1+\cdots+c_tX_t$, with $c_i\in R\ \backslash\ \{0\}$ and $X_i\in {\rm Mon}(A)$, for $1\leq i\leq t$.
\end{enumerate}
\end{remark}
\begin{proposition}[\cite{LezamaGallego}, Proposition
3]\label{sigmadefinition}
Let $A$ be a skew PBW  extension over $R$. For each $1\leq i\leq
n$, there exist an injective endomorphism $\sigma_i:R\rightarrow
R$ and an $\sigma_i$-derivation $\delta_i:R\rightarrow R$ such that $x_ir=\sigma_i(r)x_i+\delta_i(r)$, for every $r\in R$.
\end{proposition}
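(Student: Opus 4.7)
The plan is to read off $\sigma_i$ and $\delta_i$ directly from condition (iii) of Definition~\ref{def.pbwextension} and use the uniqueness of the decomposition on the left $R$-basis $\mathrm{Mon}(A)$ to force the algebraic identities. Specifically, for each $1\le i\le n$ and each $r\in R$, condition (iii) (together with the trivial case $r=0$) lets me write
\[
x_i r = c_{i,r}x_i + d_{i,r},\qquad c_{i,r},d_{i,r}\in R,
\]
and Remark~\ref{notesondefsigampbw}(i) (together with the fact that $\{1,x_1,\dots,x_n\}\subseteq\mathrm{Mon}(A)$ is part of the basis) ensures that the pair $(c_{i,r},d_{i,r})$ is uniquely determined by $r$. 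I would then define
\[
\sigma_i(r):=c_{i,r},\qquad \delta_i(r):=d_{i,r}.
\]

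The next step is to verify additivity and the ring/derivation axioms by exploiting associativity in $A$ together with uniqueness. For additivity, I would expand $x_i(r+s)$ in two ways and match coefficients of $x_i$ and the scalar term; this forces $\sigma_i(r+s)=\sigma_i(r)+\sigma_i(s)$ and $\delta_i(r+s)=\delta_i(r)+\delta_i(s)$. For multiplicativity I would compute $x_i(rs)$ two ways:
\[
x_i(rs)=\sigma_i(rs)x_i+\delta_i(rs),\qquad
(x_i r)s=\sigma_i(r)\sigma_i(s)x_i+\sigma_i(r)\delta_i(s)+\delta_i(r)s,
\]
and again read off, from uniqueness on the basis, that $\sigma_i$ is multiplicative and that $\delta_i$ satisfies the twisted Leibniz rule $\delta_i(rs)=\sigma_i(r)\delta_i(s)+\delta_i(r)s$. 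The evaluation $x_i\cdot 1=x_i$ gives $\sigma_i(1)=1$ and $\delta_i(1)=0$, finishing that $\sigma_i$ is a ring endomorphism and $\delta_i$ is a $\sigma_i$-derivation.

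Finally, injectivity of $\sigma_i$ is exactly where condition (iii) is used in its full strength: it asserts that $c_{i,r}\in R\setminus\{0\}$ whenever $r\in R\setminus\{0\}$. Hence $\sigma_i(r)=0$ forces $r=0$. I expect this to be the only subtle point, in the sense that the proof does not follow from mere uniqueness on the basis but requires the nonvanishing clause built into Definition~\ref{def.pbwextension}(iii); the rest of the argument is a bookkeeping exercise comparing leading and constant parts of elements of $R\oplus Rx_i$ inside the free left $R$-module $A$.
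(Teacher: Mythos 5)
Your proposal is correct and is essentially the standard argument for this result (the paper itself gives no proof, deferring to the cited source, where the proof proceeds exactly as you describe: define $\sigma_i(r):=c_{i,r}$ and $\delta_i(r):=x_ir-c_{i,r}x_i$, use uniqueness of coefficients over the left $R$-basis ${\rm Mon}(A)$ to derive additivity, multiplicativity and the $\sigma_i$-Leibniz rule from associativity, and get injectivity from the nonvanishing clause in Definition~\ref{def.pbwextension}(iii)). No gaps.
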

\begin{definition}[\cite{LezamaGallego}, Definition 6]\label{definitioncoefficients}
Let $A$ be a skew PBW  extension over $R$ with endomorphisms
$\sigma_i$, $1\leq i\leq n$, as in Proposition
\ref{sigmadefinition}.
\begin{enumerate}
\item[\rm (i)]For $\alpha=(\alpha_1,\dots,\alpha_n)\in \mathbb{N}^n$,
$\sigma^{\alpha}:=\sigma_1^{\alpha_1}\cdots \sigma_n^{\alpha_n}$,
$|\alpha|:=\alpha_1+\cdots+\alpha_n$. If
$\beta=(\beta_1,\dots,\beta_n)\in \mathbb{N}^n$, then
$\alpha+\beta:=(\alpha_1+\beta_1,\dots,\alpha_n+\beta_n)$.
\item[\rm (ii)]For $X=x^{\alpha}\in {\rm Mon}(A)$,
$\exp(X):=\alpha$ and $\deg(X):=|\alpha|$. The symbol $\succeq$ will denote a total order defined on ${\rm Mon}(A)$ (a total order on $\mathbb{N}_0^n$). For an
 element $x^{\alpha}\in {\rm Mon}(A)$, ${\rm exp}(x^{\alpha}):=\alpha\in \mathbb{N}_0^n$.  If
$x^{\alpha}\succeq x^{\beta}$ but $x^{\alpha}\neq x^{\beta}$, we
write $x^{\alpha}\succ x^{\beta}$. If $f=c_1X_1+\dotsb +c_tX_t\in
A$, $c_i\in R\ \backslash\ \{0\}$, with $X_1\prec \dotsb \prec X_t$, then ${\rm
lm}(f):=X_t$ is the \textit{leading monomial} of $f$, ${{\rm
lc}}(f):=c_t$ is the \textit{leading coefficient} of $f$, ${\rm
lt}(f):=c_tX_t$ is the \textit{leading term} of $f$,  ${\rm exp}(f):={\rm exp}(X_t)$ is the \textit{order} of $f$, and $E(f):=\{{\rm exp}(X_i)\mid 1\le i\le t\}$. Finally, if $f=0$, then
${\rm lm}(0):=0$, ${\rm lc}(0):=0$, ${\rm lt}(0):=0$. We also
consider $X\succ 0$ for any $X\in {\rm Mon}(A)$. For a detailed description of monomial orders in skew PBW  extensions, see \cite{LezamaGallego}, Section 3.
\item[\rm (iii)]If $f$ is an element as in Remark  \ref{notesondefsigampbw} (iv), then $\deg(f):=\max\{\deg(X_i)\}_{i=1}^t$.
\end{enumerate}
\end{definition}
Skew PBW extensions are  characterized in the following way.
\begin{proposition}[\cite{LezamaGallego}, Theorem 7]\label{coefficientes}
Let $A$ be a polynomial ring over $R$ with respect to a set of indeterminates  
$\{x_1,\dots,x_n\}$. $A$ is a skew PBW extension over $R$ if and only if the following conditions are satisfied:
\begin{enumerate}
\item[\rm (i)]for each $x^{\alpha}\in {\rm Mon}(A)$ and every $0\neq r\in R$, there exist unique elements $r_{\alpha}:=\sigma^{\alpha}(r)\in R\ \backslash\ \{0\}$, $p_{\alpha ,r}\in A$ such that $x^{\alpha}r=r_{\alpha}x^{\alpha}+p_{\alpha , r}$,  where $p_{\alpha ,r}=0$, or $\deg(p_{\alpha ,r})<|\alpha|$ if
$p_{\alpha , r}\neq 0$. If $r$ is left invertible,  so is $r_\alpha$.
\item[\rm (ii)]For each $x^{\alpha},x^{\beta}\in {\rm Mon}(A)$, there exist unique elements $c_{\alpha,\beta}\in R$ and $p_{\alpha,\beta}\in A$ such that
$x^{\alpha}x^{\beta} = c_{\alpha,\beta}x^{\alpha+\beta}+p_{\alpha,\beta}$, where $c_{\alpha,\beta}$ is left invertible, $p_{\alpha,\beta}=0$, or $\deg(p_{\alpha,\beta})<|\alpha+\beta|$ if
$p_{\alpha,\beta}\neq 0$.
\end{enumerate}
\end{proposition}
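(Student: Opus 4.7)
The plan is to prove the two implications separately. For the forward direction, I would argue by induction: on $|\alpha|$ for (i), and on $|\alpha|+|\beta|$ for (ii), starting from Proposition \ref{sigmadefinition} and the axioms of Definition \ref{def.pbwextension}. For the converse, I would simply specialize (i) and (ii) to single-index exponents to recover the defining conditions (iii) and (iv). Uniqueness of $r_\alpha$, $p_{\alpha,r}$, $c_{\alpha,\beta}$, $p_{\alpha,\beta}$ is immediate in every case, since ${\rm Mon}(A)$ is a left $R$-basis of $A$.

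For (i), the base case $|\alpha|=1$, $\alpha=e_i$, is Proposition \ref{sigmadefinition}: $x_ir=\sigma_i(r)x_i+\delta_i(r)$, with $\sigma_i(r)\neq 0$ by injectivity of $\sigma_i$ and $\delta_i(r)\in R$. For the inductive step, I would let $i$ be the least index with $\alpha_i>0$ and write $x^{\alpha}=x_ix^{\alpha-e_i}$; applying the hypothesis to $x^{\alpha-e_i}r$ and then pushing $x_i$ past the resulting leading scalar via Proposition \ref{sigmadefinition} produces $\sigma_i\sigma^{\alpha-e_i}(r)x^{\alpha}$, and the choice of minimal $i$ forces $\sigma_i\sigma^{\alpha-e_i}=\sigma^{\alpha}$. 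The $\sigma^{\alpha}(r)$ factor is nonzero because each $\sigma_j$ is injective. The left-invertibility assertion is clean: if $sr=1$, then applying the unital ring endomorphism $\sigma_i$ gives $\sigma_i(s)\sigma_i(r)=1$, and iteration yields left invertibility of $\sigma^{\alpha}(r)$.

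For (ii), induction on $|\alpha|+|\beta|$. The base case $\alpha=e_i$, $\beta=e_j$ is Definition \ref{def.pbwextension}(iv): if $i\le j$ the product is already standard and $c_{e_i,e_j}=1$; if $i>j$ then $c_{e_i,e_j}=c_{j,i}$, which is left invertible with left inverse $c_{i,j}$ by Remark \ref{notesondefsigampbw}(iii). For the inductive step, I would split one of the monomials into shorter pieces and combine the inductive hypotheses, using (i) to carry scalars across monomials. Each elementary transposition $x_ix_j$ with $i>j$ contributes a left-invertible factor, so the telescoping product defining $c_{\alpha,\beta}$ remains left invertible, and all correction terms have degree $<|\alpha|+|\beta|$.

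For the converse, the polynomial-ring hypothesis already provides (i) and (ii) of Definition \ref{def.pbwextension}. Specializing the proposition's (i) to $\alpha=e_i$ yields $p_{e_i,r}\in R$ (degree $<1$), giving Definition \ref{def.pbwextension}(iii) with $c_{i,r}:=\sigma_i(r)\neq 0$. Specializing (ii) to $\alpha=e_j$, $\beta=e_i$ yields $p_{e_j,e_i}\in R+Rx_1+\cdots+Rx_n$ (degree $<2$), giving Definition \ref{def.pbwextension}(iv) with $c_{i,j}:=c_{e_j,e_i}\neq 0$. The main obstacle is the bookkeeping in the forward direction: at each inductive step one must verify that rewriting non-standard products back into the basis ${\rm Mon}(A)$ never increases the total degree beyond the expected bound, and that the accumulated leading scalar is \emph{exactly} $\sigma^{\alpha}(r)$ (respectively a left-invertible product of swap constants) rather than some inequivalent composition of $\sigma_j$'s. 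This ultimately rests on Definition \ref{def.pbwextension}(iv), which guarantees that every commutation correction is lower order.
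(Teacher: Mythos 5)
The paper itself does not prove this proposition; it is quoted verbatim from \cite{LezamaGallego} (Theorem 7), so there is no internal proof to compare against. Your double induction --- on $|\alpha|$ for (i) and on $|\alpha|+|\beta|$ for (ii), intertwined so that the degree bound on correction terms is available at each step, with the converse obtained by specializing to exponents $e_i$ --- is the standard argument from that reference and is correct in outline; the one detail worth adding is that in the converse, condition (iv) of Definition \ref{def.pbwextension} for a pair with $i>j$ is recovered by left-multiplying the relation $x_ix_j=c_{e_i,e_j}x_jx_i+p_{e_i,e_j}$ by a left inverse of $c_{e_i,e_j}$, rather than by reading off $c_{e_j,e_i}$ directly.
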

There are some examples of skew PBW extensions which are very important for several results in the paper (see Propositions \ref{inocente}, \ref{opbijectiveprop}, and Theorem \ref{teo.skew imp skew}). This is the content of the following definition.
\begin{definition}\label{sigmapbwderivationtype}
Let $A$ be a skew PBW extension over $R$, $\Sigma:=\{\sigma_1,\dotsc, \sigma_n\}$ and $\Delta:=\{\delta_1,\dotsc, \delta_n\}$, where $\sigma_i$ and $\delta_i$ ($1\leq i\leq n$) are as in Proposition \ref{sigmadefinition}.
\begin{enumerate}
\item[\rm (a)]\label{def.constant} Any element $r$ of $R$ such that $\sigma_i(r)=r$ and $\delta_i(r)=0$, for all $1\leq i\leq n$, it is  called a {\em constant}. $A$ is called \emph{constant} if every element of $R$ is constant.
\item[\rm (b)]\label{def.quasicom}  $A$ is called \textit{quasi-commutative} if the conditions
{\rm(}iii{\rm)} and {\rm(}iv{\rm)} in Definition
\ref{def.pbwextension} are replaced by the following conditions: (iii') for each $1\leq i\leq n$ and every $r\in R\ \backslash\ \{0\}$, there exists $c_{i,r}\in R\ \backslash\ \{0\}$ such that $x_ir=c_{i,r}x_i$; (iv') for any $1\leq i,j\leq n$, there exists $c_{i,j}\in R\ \backslash\ \{0\}$ such that $x_jx_i=c_{i,j}x_ix_j$.
\item[\rm (c)] $A$ is called \textit{bijective}, if $\sigma_i$ is bijective for each $\sigma_i\in \Sigma$, and the elements $c_{i,j}$ are invertible for any $1\leq
i<j\leq n$. The importance of the restriction on the injective endomorphisms $\sigma_i$ is precisely due to the fact that under this condition we have $Rx_i + R = x_iR + R$, and hence every element $f=c_0 + c_1X_1+\cdots+c_tX_t\in A$ (Remark \ref{notesondefsigampbw} (iv)),  can be rewritten in the reverse form $f=c'_0 + X_1c'_1 + \dotsb + X_tc'_t$. In other words, if the functions $\sigma_i$ are bijective, then $A_R$ is a right free $R$-module with basis ${\rm Mon}(A)$  (\cite{LezamaReyes2014}, Proposition 1.7). In fact, a lot of properties (Noetherianess, regularity, Serre's theorem, global homological dimension, Gelfand-Kirillov dimension, Goldie dimension, semisimple Jacobson, prime ideals, Quillen's $K$-groups of higher algebraic $K$-theory, Baerness, quasi-Baerness, Armendariz, etc) of skew PBW extensions have been studied using this assumption of bijectivity (see \cite{ReyesPhD}, \cite{Reyes2013},  \cite{LezamaReyes2014}, \cite{Reyes2014}, \cite{Reyes2014UIS}, \cite{Reyes2015}, \cite{ReyesSuarez2016a}, \cite{SuarezReyes2017}, \cite{Suarez},     \cite{SuarezReyes2017c}, and others). 
\end{enumerate}
\end{definition}
\section{Tensor product of skew PBW extensions}\label{tensorproduct}
\begin{proposition}{\bf (Change of scalars)}\label{inocente}
If $A$ is a skew PBW extension over $k$ and $B$ is a commutative $k$-algebra, then $B\otimes_k A$ is a skew PBW extension over $B\otimes_k k$, that is, an extension over $B$.
\begin{proof}
We show that the four conditions established in Definition \ref{def.pbwextension} are satisfied. First of all, it is clear that $B\otimes_k k \subseteq B\otimes_k A$. Second of all, we know that $B\otimes_k A$ is an $B$-algebra under the product $b'(b\otimes a):=b'b\otimes a$, and $B\otimes_k A$ is left $B$-free with the same rank of $A$ as $k$-module (\cite{Rowen2008}, Remark 18.27), and since $B\cong B\otimes_k k$, it follows that $B\otimes_k A$ is a left $B\otimes_k k$-free module. Note that
\[
{\rm Mon}(B\otimes_k A) = \{(1\otimes x_1)^{\alpha_1} \dotsb (1\otimes x_n)^{\alpha_n}\mid \alpha_i\in \mathbb{N}, 1\le i\le n\},
\]
which shows that $B\otimes_k A$ is left $B\otimes_k k$-free of the same rank of $A$ as $k$-module. Now, if $r\otimes k'$ is a non-zero element of $B\otimes_k k$ (that is, the element $rk'$ since $B\cong B\otimes_k k$), we can see that for every element $1\otimes x_i$ of the basis, there exists a non-zero element $1\otimes c_{i,k'}\in B\otimes_k k$ such that
\[
(1\otimes x_i)(r\otimes k') - (1\otimes c_{i,k'})(1\otimes x_i) \in B\otimes_k k,
\]
where $c_{i,r}\in k$ satisfies $x_ir-c_{i,r}x_i \in k$, since $A$ is a skew PBW extension over $k$. Finally, the condition
\[
(1\otimes x_i)(1\otimes x_j) - (1\otimes c_{i,j})(1\otimes x_j)(1\otimes x_i) \in B\otimes_k k + (B\otimes_k k)(1\otimes x_1) + \dotsb + (B\otimes_k k)(1\otimes x_n),
\]
it follows from Definition \ref{def.pbwextension} (iv) applied to the extension $A$.
\end{proof}
\end{proposition}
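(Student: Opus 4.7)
The plan is to verify the four axioms of Definition \ref{def.pbwextension} directly for $B\otimes_k A$ with base ring identified as $B\cong B\otimes_k k$, exploiting the fact that the tensor construction makes every element of $B$ commute with every $x_i$-slot, so all the twisting data is inherited essentially unchanged from $A$. Throughout, I would think of a typical element of the base as $b\otimes 1$ with $b\in B$, a typical generator as $1\otimes x_i$, and use the product rule $(b_1\otimes a_1)(b_2\otimes a_2)=b_1b_2\otimes a_1a_2$ (well-defined because $k$ is central in both factors and $B$ is commutative).

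For condition (i) the embedding $B\otimes_k k\hookrightarrow B\otimes_k A$ is immediate from the $k$-algebra embedding $k\hookrightarrow A$ and flatness issues being invisible since we only need an algebra map, not an inclusion of flat modules. For condition (ii) I would invoke the standard fact that tensoring a free module preserves the basis: since $A$ is left free over $k$ with basis $\{x^{\alpha}\}$, the module $B\otimes_k A$ is left free over $B$ with basis $\{1\otimes x^{\alpha}\}$, and this collection is exactly the set of standard monomials $\mathrm{Mon}(B\otimes_k A)$ in the generators $1\otimes x_i$.

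For condition (iii) I would compute, for any $b\in B$,
\[
(1\otimes x_i)(b\otimes 1)=b\otimes x_i=(b\otimes 1)(1\otimes x_i),
\]
so I can take $c_{i,b\otimes 1}:=b\otimes 1$ and the remainder in $B\otimes_k k$ is zero; this is where the commutativity of $B$ is really used. For condition (iv) I would start from the relation $x_jx_i-c_{i,j}x_ix_j=r_0+r_1x_1+\cdots+r_nx_n$ in $A$ with $r_\ell\in k$, and apply $1\otimes(-)$ to get
\[
(1\otimes x_j)(1\otimes x_i)-(1\otimes c_{i,j})(1\otimes x_i)(1\otimes x_j)=1\otimes r_0+\sum_{\ell=1}^n(1\otimes r_\ell)(1\otimes x_\ell),
\]
which lies in $B\otimes_k k+\sum_\ell(B\otimes_k k)(1\otimes x_\ell)$, as required, with $1\otimes c_{i,j}\neq 0$ since $c_{i,j}\neq 0$ and $B\otimes_k k\cong B$ is free on $1$.

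There is no serious obstacle here: the statement is essentially a bookkeeping result about how the skew PBW data transports along a central base change. The one point deserving care is the verification that $\{1\otimes x^{\alpha}\}$ is genuinely a $B$-basis (so that the uniqueness of representations underlying Remark \ref{notesondefsigampbw} still holds in $B\otimes_k A$), and that the nonzero constants $c_{i,r}$ and $c_{i,j}$ in $A$ remain nonzero once tensored into $B\otimes_k k$; both follow from the freeness of $B\otimes_k k$ as a $B$-module of rank one.
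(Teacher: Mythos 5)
Your proof is correct and follows essentially the same strategy as the paper: verify the four axioms of Definition \ref{def.pbwextension} directly, transporting the $k$-basis of $A$ and its defining relations along $1\otimes(-)$. The one substantive divergence is in axiom (iii). The paper keeps the constants $c_{i,k'}$ coming from the relation $x_ik'-c_{i,k'}x_i\in k$ in $A$ and writes $(1\otimes x_i)(r\otimes k')-(1\otimes c_{i,k'})(1\otimes x_i)\in B\otimes_k k$; you instead identify every element of $B\otimes_k k$ with some $b\otimes 1$ and observe that $1\otimes x_i$ commutes with it outright, so the constant is $b\otimes 1$ and the remainder vanishes. The two versions agree precisely on the proposition's actual domain of validity: for $(b_1\otimes a_1)(b_2\otimes a_2)=b_1b_2\otimes a_1a_2$ to be a well-defined multiplication on $B\otimes_k A$ one needs $k$ to be central in $A$, which forces $\sigma_i|_k=\mathrm{id}$ and $\delta_i|_k=0$, hence $c_{i,k'}=k'$ --- an assumption you make explicit and the paper leaves implicit (it is stated only later, in the proof of the proposition on $A\otimes_k A'$). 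Your treatment has the minor added benefit that the constant $b\otimes 1$ is automatically nonzero whenever $b\otimes 1\neq 0$, whereas $1\otimes c_{i,k'}$ could in principle vanish if $B$ had $k$-torsion. The only spot deserving one more word is your claim that $1\otimes c_{i,j}\neq 0$ ``because $B\otimes_k k\cong B$ is free on $1$'': freeness of $B$ over itself does not prevent $c_{i,j}\cdot 1_B=0$; what actually saves this is that $c_{i,j}$ is left invertible in $k$ (Remark \ref{notesondefsigampbw}), so its image in any nonzero $k$-algebra $B$ is left invertible and in particular nonzero.
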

\begin{remark}
The injective endomorphisms and the derivations for the skew PBW extension mentioned in the Proposition \ref{inocente} are given by $\sigma_i^{\otimes}:B\otimes_k k\to B\otimes_k k,\ \sigma_i^{\otimes}(b\otimes r):= b\otimes \sigma_i(r)$, and $\delta_i^{\otimes}:B\otimes_k k \to B\otimes_k k,\ \delta_i^{\otimes}(b\otimes r):= b\otimes \delta_i(r)$, respectively. It is straightforward to see that the functions $\sigma_i^{\otimes}$ are actually injective endomorphisms and that the functions $\delta^{\otimes}_i$ are $\sigma_i^{\otimes}$-derivations, for every $1\le i\le n$.
\end{remark}
\begin{examples}
If $A=\sigma(k)\langle x_1,\dotsc, x_n\rangle$, and $B=k[y]=k[y_1,\dotsc, y_m]$, then
\[
k[y]\otimes_k A\cong \sigma (k[y]\otimes_k k)\langle 1\otimes x_1,\dotsc, 1\otimes x_n\rangle\cong \sigma(k[y])\langle z_1,\dotsc, z_n\rangle.
\]
Let us see some examples of remarkable non-commutative rings which illustrated this isomorphism. A detailed reference of every example can be found in \cite{ReyesPhD} or   \cite{LezamaReyes2014}.
\begin{enumerate}
\item [\rm (a)] \label{additivean} \label{additivean} \textit{Additive analogue of the Weyl algebra.} This algebra is the $\Bbbk$-algebra $A_n(q_1,\dots,q_n)$ generated by the indeterminates $x_1,\dots,x_n,$ $y_1,\dots,y_n$ subject to the relations:
\begin{align*}
x_jx_i& = x_ix_j, & 1 \leq i,j \leq n,\\
y_jy_i&= y_iy_j, & 1 \leq i,j \leq n,\\
y_ix_j&=x_jy_i,& i\neq j,\\
y_ix_i&= q_ix_iy_i + 1,& 1\leq i\leq n,
\end{align*}
where $q_i\in \Bbbk\ \backslash\ \{0\}$. From \cite{LezamaReyes2014}, Example 3.5 (a), we have the isomorphisms $A_n(q_1, \dots, q_n)\cong \sigma(\Bbbk)\langle x_1,\dotsc, x_n; y_1,\dotsc, y_n\rangle \cong \sigma(\Bbbk[x_1,\dotsc,x_n])\langle
y_1,\dots,y_n\rangle$, that is, $A_n(q_1,\dots,q_n)$ is a skew PBW extension of the field $\Bbbk$ or the polynomial ring $\Bbbk[x_1,\dotsc,x_n]$. Now, by Proposition \ref{inocente} we obtain the isomorphism $\Bbbk[x_1,\dotsc,x_n]\otimes_{\Bbbk} \sigma(\Bbbk)\langle y_1,\dotsc, y_n\rangle\cong \sigma(\Bbbk[x_1,\dotsc,x_n])\langle 1\otimes y_1,\dotsc, y_n\rangle$, whence $A_n(q_1,\dotsc, q_n)\cong \Bbbk[x_1,\dotsc,x_n]\otimes_{\Bbbk} \sigma(\Bbbk)\langle y_1,\dotsc, y_n\rangle$.
\item [\rm (b)] \textit{Multiplicative analogue of the Weyl algebra}. By definition, this non-commutative ring is the $\Bbbk$-algebra $\mathcal{O}_n(\lambda_{ji})$ generated by
$x_1,\dots,x_n$ satisfying $x_jx_i
=\lambda_{ji}x_ix_j ,\ 1\leq i<j\leq n$, $\lambda_{ji}\in
\Bbbk\ \backslash\ \{0\}$. It can be proved that $\mathcal{O}_n(\lambda_{ji})\cong\sigma(\Bbbk)\langle
x_1,\dotsc,x_n\rangle\cong \sigma(\Bbbk[x_1])\langle
x_2,\dotsc,x_n\rangle$ (\cite{LezamaReyes2014}, Example 3.5 (a)).  Now, Proposition \ref{inocente} guarantees that $\Bbbk[x_1]\otimes_{\Bbbk} \sigma(\Bbbk)\langle x_2,\dotsc, x_n\rangle\cong \sigma(\Bbbk[x_1])\langle 1\otimes x_2,\dotsc,1\otimes x_n\rangle$, and so $\mathcal{O}_n(\lambda_{ji})\cong \Bbbk[x_1]\otimes_{\Bbbk} \sigma(\Bbbk)\langle x_2,\dotsc, x_n\rangle$.
\end{enumerate}
\end{examples}

The next proposition treats the construction of skew PBW extensions over the same ring of coefficients. Note that if $A$ is a skew PBW extension over a ring $R$, then $A$ is a right $R$-module under the multiplication in $A$, that is, $f\cdot r:=fr,\ f\in A, r\in R$. However, $A$ is not necessarily a right free $R$-module; in fact, if $A$ is bijective, then $A_R$ is free with basis the set ${\rm Mon}(A)$ established in Definition \ref{def.pbwextension} (ii) (see \cite{LezamaReyes2014}, Proposition 1.7 for a detailed proof of this fact).
\begin{proposition}\label{tensorproductsamering}
If $A=\sigma(R)\langle x_1,\dotsc, x_n \rangle$ and $A'=\sigma(R)\langle y_1, \dotsc, y_m\rangle$ are two  skew PBW extensions over $R$, then $A\otimes_R A'$ is also a skew PBW extension over $R$, and we have $A\otimes_R A'=\sigma(R)\langle x_1\otimes 1,\dotsc, x_n\otimes 1, 1\otimes y_1, \dots, 1\otimes y_m\rangle$.
\begin{proof}
Again, let us illustrate the four conditions of Definition \ref{def.pbwextension}. It is clear that $R\subseteq A\otimes_R A'$. Now, since the product of left free $R$-modules is a left free $R$-module with $R$-basis $\{x^{\alpha}\otimes y^{\beta}\mid \alpha\in \mathbb{N}^{n},\ \beta\in \mathbb{N}^{m}\}$, and having in mind that 
{\small{\begin{align*}
\{x^{\alpha}\otimes y^{\beta}\mid \alpha\in \mathbb{N}^{n},\ \beta\in \mathbb{N}^{m}\} = &\ \{x_1^{\alpha_1}\dotsb x_n^{\alpha_n}\otimes y_1^{\beta_1}\dotsb y_m^{\alpha_m}\mid \alpha_i,\ \beta_j\in \mathbb{N}\}\\
= &\ \{(x_1^{\alpha_1}\otimes 1) \dotsb (x_n^{\alpha_n}\otimes 1) (1\otimes y_1^{\beta_1})\dotsb  (1\otimes y_m^{\beta_m})\mid \alpha_i,\ \beta_j\in \mathbb{N}\}\\
= &\ \{(x_1\otimes 1)^{\alpha_1}\dotsb (x_n\otimes 1)^{\alpha_n} (1\otimes y_1)^{\beta_1}\dotsb  (1\otimes y_m)^{\beta_m}\mid \alpha_i,\ \beta_j\in \mathbb{N}\},
\end{align*}}}
then the set 
\[
{\rm Mon}(A\otimes A'):=\{(x_1\otimes 1)^{\alpha_1}\dotsb (x_n\otimes 1)^{\alpha_n}(1\otimes y_1)^{\beta_1}\dotsb  (1\otimes y_m)^{\beta_m}\mid \alpha_i,\ \beta_j\in \mathbb{N}\},
\] 
is the $R$-basis for the left free $R$-module $A\otimes A'$.

For a non-zero element $r$ of $R$, we have $(x_i\otimes 1)r - c_{i,r}(x_i\otimes 1)\in R\otimes 1$, $(1\otimes y_j)r - c_{j,r}(1\otimes y_j)\in 1\otimes R$, for $(1\le i\le n, 1\le j\le m)$, because $A$ and $A'$ are skew PBW extensions of $R$.

Now, note that 
\begin{align*}
(x_j\otimes 1)(x_i\otimes 1) - c_{i,j}(x_i\otimes 1)(x_j\otimes 1) \in R\otimes 1 + \sum_{l=1}^{n} R(x_l\otimes 1) + \sum_{p=1}^{m} R(1\otimes y_p), \ 1\le i, j \le n\\
(1\otimes y_j)(1\otimes y_i) - c'_{i,j}(1\otimes y_i)(1\otimes y_j) \in 1\otimes R + \sum_{l=1}^{n} R(x_l\otimes 1) + \sum_{p=1}^{m} R(1\otimes y_p), \ 1\le i, j \le m\\
(x_i\otimes 1),
\end{align*}
where the elements $c_{i,j}, c'_{i,j}\in R$ are considered from Definition \ref{def.pbwextension} (iv) for the extensions $A$ and $A'$, respectively. Finally,  we impose the relations $(1\otimes y_j) (x_i\otimes 1) - (x_i\otimes 1)(1\otimes y_j) = 0$, $(1\le i \le n,\ 1\le j\le m)$, with the aim of guarantee  the condition (iv) of the definition of the skew PBW extension $A\otimes_R A'$ over $R$.
\end{proof}
\end{proposition}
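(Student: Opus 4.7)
The plan is to verify the four axioms of Definition \ref{def.pbwextension} directly for $A \otimes_R A'$, following the template used in Proposition \ref{inocente}. Condition (i) is immediate from the inclusion $r \mapsto r \otimes 1$ (which equals $1 \otimes r$).

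For condition (ii), the main task is to produce a left $R$-basis. Starting from the left free $R$-bases ${\rm Mon}(A) = \{x^\alpha\}$ and ${\rm Mon}(A') = \{y^\beta\}$, I would appeal to the standard fact that the tensor product of left free $R$-modules is left free with basis the elementary tensors $\{x^\alpha \otimes y^\beta\}$, and then rewrite each such tensor as
\[
(x_1 \otimes 1)^{\alpha_1}\dotsb (x_n \otimes 1)^{\alpha_n}(1 \otimes y_1)^{\beta_1}\dotsb (1 \otimes y_m)^{\beta_m},
\]
which identifies ${\rm Mon}(A \otimes_R A')$ as the required basis. Here I would use that $A$ (and $A'$) is a right free $R$-module with the same monomial basis in the bijective case, as noted in the paragraph preceding the proposition, so that the $R$-bimodule balancing needed for the tensor product is legitimate.

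For condition (iii), I would transport the defining commutation relations across $\otimes$: from $x_i r - c_{i,r} x_i \in R$ in $A$ one gets $(x_i \otimes 1)r - c_{i,r}(x_i \otimes 1) \in R \otimes 1$, and similarly $(1 \otimes y_j)r - c'_{j,r}(1 \otimes y_j) \in 1 \otimes R$. For condition (iv), the relations among the $x_i \otimes 1$ follow directly from those of $A$, the relations among the $1 \otimes y_j$ from those of $A'$, and the cross relations $(1 \otimes y_j)(x_i \otimes 1) = (x_i \otimes 1)(1 \otimes y_j)$ must be imposed (with $c_{i,j}=1$ and zero remainder), which is precisely what the tensor product forces.

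The main delicate point I anticipate is ensuring that $A \otimes_R A'$ genuinely carries a ring structure compatible with the $R$-bimodule one. The formula $(a \otimes a')(b \otimes b') = ab \otimes a'b'$ requires that elements of $R$ slide unambiguously across $\otimes$; this in turn needs the $y_j$-generators to commute with $R$ in the same way on either side of $\otimes$, and likewise for $x_i$. In the skew PBW setting, this is precisely guaranteed by the cross relations above together with the right-free structure of $A$ and $A'$ over $R$, so the ring structure and the verification of axioms (iii)--(iv) get carried out simultaneously.
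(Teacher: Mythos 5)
Your proof follows essentially the same route as the paper's: it verifies the four axioms of Definition \ref{def.pbwextension} with the same monomial basis $\{(x_1\otimes 1)^{\alpha_1}\dotsb (x_n\otimes 1)^{\alpha_n}(1\otimes y_1)^{\beta_1}\dotsb (1\otimes y_m)^{\beta_m}\}$, the same transported relations for conditions (iii)--(iv), and the same imposed commutation of the cross terms $(1\otimes y_j)(x_i\otimes 1)=(x_i\otimes 1)(1\otimes y_j)$. The only substantive difference is that you explicitly flag the well-definedness of the multiplication on $A\otimes_R A'$ over the noncommutative ring $R$ (via the right $R$-module structure of $A$ and $A'$), a genuine subtlety that the paper addresses only implicitly in the paragraph preceding the proposition.
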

\begin{remark}\label{dogma}
If $A$ and $A'$ are two skew PBW extensions over $R$ as in Proposition \ref{tensorproductsamering}, then the injective endomorphisms $\sigma_i^{\otimes}$ of $R$, and the $\sigma_i^{\otimes}$-derivations $\delta_i^{\otimes}$ of $R$ for the extension $A\otimes_R A'$, are obtained using the injective endomorphisms  $\sigma_i$ and $\sigma_j'$, and the  $\sigma_i$-derivations, and $\sigma_j'$-derivations of the extensions $A$  and  $A'$, respectively. Therefore, the Proposition \ref{tensorproductsamering} can be established in the following way: the tensor product of two skew PBW extensions $A=\sigma(R)\langle x_1,\dotsc, x_n\rangle$ and $A'=\sigma(R)\langle y_1,\dotsc, y_m\rangle$, given by the relations
\begin{align*}
x_ir = c_{i,r}x_i + \delta_i(r),\quad \quad &\ 1\le i\le n,\\
x_jx_i-c_{i,j}x_ix_j\in R+\sum_{l=1}^{n}  Rx_l,\quad \quad &\ 1\le i, j \le n,
\end{align*}
and,
\begin{align*}
y_jr = c'_{j,r}y_j + \delta_j'(r),\quad \quad &\ 1\le i\le m,\\
y_jy_i-c'_{i,j}y_iy_j\in R+\sum_{p=1}^{m}  Ry_p,\quad \quad &\ 1\le i, j \le m,
\end{align*}
respectively, it is the left free algebra
\[
A\otimes A' = R\langle x_1,\dotsc x_n \rangle / I,
\]
where $R\langle x_1,\dotsc, x_n\rangle$ is the free $R$-algebra, and $I$ is the left ideal generated by the relations
\begin{align*}
(x_i\otimes 1)r - c_{i,r}(x_i\otimes 1) - \delta_i(r)\otimes 1,\quad\quad &\ 1\le i\le n \\  (1\otimes y_j)r - c_{j,r}(1\otimes y_j) - 1\otimes \delta_j'(r),\quad\quad &\ 1\le j\le m\\
(x_j\otimes 1)(x_i\otimes 1) - c_{i,j}(x_i\otimes 1)(x_j\otimes 1) + R\otimes 1 + \sum_{l=1}^{n} R(x_l\otimes 1),\quad\quad &\ 1\le i, j \le n\\
(1\otimes y_j)(1\otimes y_i) - c'_{i,j}(1\otimes y_i)(1\otimes y_j) + 1\otimes R + \sum_{p=1}^{m} R(1\otimes y_p),\quad\quad &\ 1\le i, j \le m\\
(1\otimes y_j) (x_i\otimes 1) - (x_i\otimes 1)(1\otimes y_j), \quad \quad &\ 1\le i \le n,\ 1\le j\le m.
\end{align*}
\end{remark}
From Proposition \ref{tensorproductsamering} it follows the next result.
\begin{corollary}
If $\{A_i\}_{i\in I}$ is a family of skew PBW extensions over the ring $R$, then $\bigotimes\limits_{i\in I} A_i$ is also a skew PBW extension of $R$.
\end{corollary}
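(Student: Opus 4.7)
The plan is a straightforward induction on $|I|$ using Proposition \ref{tensorproductsamering} as the inductive engine. For $|I|=1$ there is nothing to check, and for $|I|=2$ the assertion is exactly Proposition \ref{tensorproductsamering}. For the inductive step, assume the result for families of size $n$ and suppose $|I|=n+1$. Picking some $i_0\in I$ and using associativity of the tensor product over $R$, I would write
\[
\bigotimes_{i\in I} A_i \;\cong\; \Bigl(\bigotimes_{i\in I\setminus\{i_0\}} A_i\Bigr)\otimes_R A_{i_0}.
\]
By the inductive hypothesis the first factor is a skew PBW extension over $R$, so one more application of Proposition \ref{tensorproductsamering} closes the step.

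At each stage I would record the presentation explicitly in order to keep the bookkeeping transparent. If $A_j=\sigma(R)\langle x_1^{(j)},\dotsc,x_{n_j}^{(j)}\rangle$, then the resulting extension is generated over $R$ by the elements $1\otimes\cdots\otimes x_k^{(j)}\otimes\cdots\otimes 1$, with $x_k^{(j)}$ sitting in the $j$-th tensor slot and $1$'s in the remaining slots; the set of standard monomials in these generators is precisely the tensor product of the monomial bases of the $A_j$, which is a left $R$-basis of the iterated tensor product. The endomorphisms and derivations are inherited from the $\sigma_k^{(j)}$ and $\delta_k^{(j)}$ via the construction in Remark \ref{dogma}, and the cross-factor relations
\[
\bigl(1\otimes\cdots\otimes x_k^{(j)}\otimes\cdots\otimes 1\bigr)\bigl(1\otimes\cdots\otimes x_l^{(j')}\otimes\cdots\otimes 1\bigr)=\bigl(1\otimes\cdots\otimes x_l^{(j')}\otimes\cdots\otimes 1\bigr)\bigl(1\otimes\cdots\otimes x_k^{(j)}\otimes\cdots\otimes 1\bigr)
\]
for $j\neq j'$ supplied by Proposition \ref{tensorproductsamering} ensure that conditions (iii) and (iv) of Definition \ref{def.pbwextension} pass to the iteration without the accumulation of new variables on the right-hand sides.

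The only caveat is that $I$ should be (essentially) finite: Definition \ref{def.pbwextension}(ii) requires a finite generating set of variables, so $\bigotimes_{i\in I} A_i$ in the sense of the corollary presumes $I$ finite, or at least that only finitely many of the $A_i$ contribute generators. I therefore expect no genuine obstacle; the whole argument reduces to iterated application of the binary case, and the main care is simply in keeping track of the growing set of variables, their $\sigma$'s and $\delta$'s, and the mutually-commuting cross relations between variables coming from different tensor factors.
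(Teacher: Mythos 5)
Your argument is correct and matches the paper's intent exactly: the paper offers no separate proof, simply stating that the corollary follows from Proposition \ref{tensorproductsamering}, which is precisely your induction on the number of factors via associativity of $\otimes_R$. Your caveat that $I$ must be (essentially) finite for Definition \ref{def.pbwextension}(ii) to apply is a worthwhile observation that the paper leaves implicit.
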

\begin{example}
The Weyl algebra $A_{n+m}(k)$ is the skew PBW extension $A_n(k)\otimes_k A_m(k)$, which can be obtained using Proposition \ref{tensorproductsamering}. More precisely, since the Weyl algebra  $A_n(k)$ is the left free $k$-algebra $k\langle x_1, \dotsc, x_n, y_1,\dotsc, y_n\rangle$ with ideal of relations generated by $x_jx_i - x_ix_j,\ y_jx_i - x_iy_j-\delta_{ij},\ y_jy_i - y_iy_j$, for $1\le i < j \le n$, and similarly $A_m(\Bbbk)$, then
\begin{align*}
A_n(k)\otimes_k A_m(k) = &\ k\langle (x_1\otimes 1), \dotsc, (x_n\otimes 1), (y_1\otimes 1), \dotsc, (y_n\otimes 1),\\ &\ (1\otimes x_1'), \dotsc, (1\otimes x_m'), (1\otimes y_1'), \dotsc, (1\otimes y_m')\rangle / I,
\end{align*}
where $I$ is the left ideal generated  by the relations
\begin{align*}
I= &\ \langle (x_j\otimes 1)(x_i\otimes 1) - (x_i\otimes 1)(x_j\otimes 1), &\ 1\le i < j\le n\\
&\ (y_j\otimes 1)(x_i\otimes 1) -(x_i\otimes 1)(y_j\otimes 1) - \delta_{ij}\otimes 1, &\ 1\le i < j \le n\\
&\ (y_j\otimes 1)(y_i\otimes 1) - (y_i\otimes 1)(y_j\otimes 1), &\ 1\le i < j\le n,\\
&\ (1\otimes x_j')(x_i\otimes 1) - (x_i\otimes 1)(1\otimes x_j'),&\ 1\le i \le n,\ 1\le j \le m,\\
&\ (1\otimes y_j')(x_i\otimes 1) - (x_i\otimes 1)(1\otimes y_j'), &\ 1\le i\le n,\ 1\le j\le m,\\
&\ (1\otimes x_j')(y_i\otimes 1) - (y_i\otimes 1)(1\otimes x_j'), &\  1\le i\le n,\ 1\le j \le m,\\
&\ (1\otimes y_j')(y_i\otimes 1) - (y_i\otimes 1)(1\otimes y_j'), &\ 1\le i\le n,\ 1\le j\le m,\\
&\ (1\otimes x_j')(1\otimes x_i') - (1\otimes x_i')(1\otimes x_j'), &\ 1\le i < j\le m,\\
&\ (1\otimes y_j')(1\otimes x_i') - (1\otimes x_i')(1\otimes y_j') - 1\otimes \delta_{ij}, &\ 1\le i< j \le m,\\
&\ (1\otimes y_j')(1\otimes y_i') - (1\otimes y_i')(1\otimes y_j'), &\ 1\le i < j \le m.
\end{align*}
If we identify $p_i:=x_i\otimes 1$ $(1\le i\le n)$, $p_{n+i}:=1\otimes x_i'$ $(1\le i\le m)$, $q_i:= y_i\otimes 1$ $(1\le i\le n)$, $q_{n+i}:=1\otimes y_i'$ $(1\le i\le m)$, then we can see that the algebra $A_n(k)\otimes_k A_m(k)$ is precisely the Weyl algebra $A_{n+m}(k)$.
\end{example}
Next, we study the tensor product of skew PBW extensions whose coefficients rings are not necessarily the same. In this way, we generalize Proposition \ref{tensorproductsamering}.
\begin{proposition}
If $A=\sigma(R)\langle x_1,\dotsc, x_n\rangle$ and $A'=\sigma(R')\langle x_1,\dotsc, x_n\rangle$ are two skew PBW extensions over the $k$-algebras $R$ and $R'$ respectively, then  $A\otimes_k A'$ is a skew PBW extension over $R\otimes_k R'$.
\begin{proof}
Let $A$ and $A'$ be skew PBW extensions over the $k$-algebras $R$ and $R'$, respectively. From the definition we know that 
\begin{align*}
x_ir = &\ \sigma_i(r)x_i+\delta_i(r), \ \ \ 1\le i\le n, \ \ \ x_jx_i-c_{i,j}x_ix_j \in R + \sum_{l=1}^{n} Rx_l, \ \ 1\le i,  j\le n,\\
y_js = &\ \sigma'_j(s)y_j + \delta'_j(s),\ \ 1\le j \le m,\ \ \
y_jy_i - d_{i,j}y_iy_j \in R' + \sum_{l=1}^{m}
R'y_l,\ \ \ \ 1\le i, j\le m,
\end{align*}
where $\sigma_i, \delta_i: R\to R$ and $\sigma'_j, \delta'_j: R'\to R'$ are as in Proposition \ref{sigmadefinition}. We assume that the elements of $k$ commute with every element of $A$ and each element of $A'$, so $A$ and $A'$ are $k$-algebras (this assumption, for example, it was used in the computation of Gelfand-Kirillov dimension for these non-commutative rings, see \cite{Reyes2013}). Note that  $A\otimes_k A'$ is a $k$-algebra with the product given by $(a\otimes a')(b\otimes b')=(ab)\otimes (a'b')$ (\cite{Rotman2009}, Proposition 2.60).  Moreover, $R\cong R\otimes_k k$,\ $R'\cong k\otimes_k R',\ A\cong A\otimes_k k$, and $A'\cong k\otimes_k A'$. We endow $A\otimes_k A'$ with the  natural structure of left $R\otimes_k R'$-module, i.e., $(r\otimes s)\cdot (a\otimes a'):=(ra)\otimes (sa')$.\\

With the aim of showing that $A\otimes_k A'$ is a skew PBW extension of  $R\otimes_k R'$, we consider the free algebra $(R\otimes_k R')/ I$, where $I$ is the left ideal generated by the relations
\begin{align*}
&\ (x_i\otimes 1)(r\otimes 1) - (\sigma_i(r)\otimes 1)(x_i\otimes 1) - \delta_i(r)\otimes 1, &\ 1\le i\le n\\
&\ (x_j\otimes 1)(x_i\otimes 1) -c_{i,j}(x_i\otimes 1)(x_j\otimes 1) - R\otimes 1 + \sum_{l=1}^{n} R(x_l\otimes 1), &\  1\le i, j\le n\\
&\ (1\otimes y_j)(1\otimes s) - (1\otimes \sigma_j'(s))(1\otimes y_j) - 1\otimes \delta_i'(s), &\ 1\le j\le m\\
&\ (1\otimes y_j)(1\otimes y_i) -d_{i,j}(1\otimes y_i)(1\otimes y_j) - 1\otimes S + \sum_{l=1}^{m} S(1\otimes y_l), &\  1\le i, j\le m\\
&\ (x_i\otimes 1)(1\otimes s) - (1\otimes s)(x_i\otimes 1), &\ 1\le i\le n\\
&\ (1\otimes y_j)(r\otimes 1) - (r\otimes 1)(1\otimes y_j), &\ 1\le j\le s\\
&\ (x_i\otimes 1)(1\otimes y_j) - (1\otimes y_j)(x_i\otimes 1), &\ 1\le i\le n,\ 1\le j\le m.
\end{align*}
It is clear that $R\otimes_k R'\subseteq A\otimes_k A'$. Since $A\otimes_k A'$ is a left $R\otimes_k R'$-module, following the notation established in Definition \ref{definitioncoefficients}, and using Remark \ref{notesondefsigampbw} (iv), we can see that $A\otimes_k A'$ is left free over $R\otimes_k R'$ (the proof is similar to the  established in \cite{Roman2008}, Theorem 14.5, and uses some arguments about the union of sets of basic monomials (see  \cite{Reyes2014}, Lemma 4.3, for details about this procedure) with basis
\[
{\rm Mon}(A\otimes A'):=\{(x_1\otimes 1)^{\alpha_1} \dotsb (x_n\otimes 1)^{\alpha_n}(1\otimes y_1)^{\alpha_{n+1}} \dotsb (1\otimes y_m)^{\alpha_{n+m}}\}
\]
The injective endomorphims and the derivations of the skew PBW extension $A\otimes_k A$ are given by
\[
\overline{\sigma_i}:R\otimes S\to R\otimes S, \ \ \ \ \
\overline{\sigma_i}(r\otimes s)=\begin{cases}
\sigma_i(r)\otimes s, &\ 1\le i\le n\\
r\otimes \sigma'_i(r), &\ n+1 \le i \le n+m, 
\end{cases}
\]
and,
\[
\overline{\delta_i}:R\otimes S\to R\otimes S,  \ \ \ \ \
\overline{\delta_i}(r\otimes s)=\begin{cases}
\delta_i(r)\otimes s, &\ 1\le i\le n\\
r\otimes \delta'_i(s), &\ n+1 \le i \le n+m,
\end{cases}
\]
respectively. Note that the functions $\overline{\sigma_i}$ are injective endomorphisms because $\sigma_i$ and $\sigma'_j$ so are. Nex, we show that  $\overline{\delta_i}$ is a $\overline{\sigma_i}$-derivation for $1\le i \le n$:
\begin{align*}
\overline{\delta_i}((r\otimes s)(r'\otimes s')) = &\ \overline{\delta_i}(rr'\otimes ss') \\
= &\ \delta_i(rr')\otimes ss'\\
= &\ (\sigma_i(r)\delta_i(r') + \delta_i(r)r')\otimes ss'\\
= &\ \sigma_i(r)\delta_i(r')\otimes ss' + \delta_i(r)r'\otimes ss'\\
= &\ (\sigma_i(r)\otimes s)(\delta_i(r')\otimes s) + (\delta_i(r)\otimes s)(r'\otimes s')\\
= &\ \overline{\sigma_i}(r\otimes s)\overline{\delta_i}(r'\otimes
s') + \overline{\delta_i}(r\otimes s)(r'\otimes s').
\end{align*}
Similarly, we can see that $\overline{\delta_i}$ is a $\overline{\sigma_i}$-derivation, for $n+1\le i \le n+m$.
\end{proof}
\end{proposition}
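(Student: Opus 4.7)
The plan is to verify the four axioms of Definition \ref{def.pbwextension} for $A\otimes_k A'$ regarded as an extension of $R\otimes_k R'$ with $n+m$ generators $x_i\otimes 1$ ($1\le i\le n$) and $1\otimes y_j$ ($1\le j\le m$). Under the identifications $R\cong R\otimes_k k$ and $R'\cong k\otimes_k R'$, both $R$ and $R'$ sit inside $R\otimes_k R'$, and the canonical embeddings $R\hookrightarrow A$, $R'\hookrightarrow A'$ tensored with the identity give $R\otimes_k R'\subseteq A\otimes_k A'$ as $k$-algebras, which is condition (i).

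For condition (ii), the key fact is that if $A$ is left free over $R$ with basis $\mathrm{Mon}(A)$ and $A'$ is left free over $R'$ with basis $\mathrm{Mon}(A')$, then $A\otimes_k A'$ is left free over $R\otimes_k R'$ with basis $\{x^\alpha\otimes y^\beta\mid \alpha\in\mathbb{N}^n,\ \beta\in\mathbb{N}^m\}$. Because the two factor algebras commute elementwise inside the tensor product over $k$, one has $x^\alpha\otimes y^\beta=(x_1\otimes 1)^{\alpha_1}\cdots(x_n\otimes 1)^{\alpha_n}(1\otimes y_1)^{\beta_1}\cdots(1\otimes y_m)^{\beta_m}$, which identifies this basis with the intended $\mathrm{Mon}(A\otimes A')$.

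Conditions (iii) and (iv) follow by bilinearity from the defining relations of $A$ and $A'$. For a pure tensor $r\otimes s\in R\otimes_k R'$ one computes
\begin{align*}
(x_i\otimes 1)(r\otimes s) &= (x_ir)\otimes s = (\sigma_i(r)\otimes s)(x_i\otimes 1) + (\delta_i(r)\otimes s),\\
(1\otimes y_j)(r\otimes s) &= r\otimes(y_js) = (r\otimes\sigma'_j(s))(1\otimes y_j) + (r\otimes\delta'_j(s)),
\end{align*}
and extends by linearity to arbitrary elements of $R\otimes_k R'$; the remainders land in $R\otimes_k R'$, giving (iii). For (iv), the relations among the $x_i\otimes 1$ alone and among the $1\otimes y_j$ alone are inherited from $A$ and $A'$ after padding by $1\otimes 1$, while the cross relations are pure commutation $(1\otimes y_j)(x_i\otimes 1)=x_i\otimes y_j=(x_i\otimes 1)(1\otimes y_j)$, which certainly lies in the required module. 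This fixes the coefficients of (iv) as $\overline{c}_{i,j}=c_{i,j}\otimes 1$, $\overline{d}_{i,j}=1\otimes d_{i,j}$, and $1\otimes 1$ in the mixed case.

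The companion endomorphisms and derivations are the ones displayed in the statement; they agree with $\sigma_i\otimes\mathrm{id}_{R'}$ and $\delta_i\otimes\mathrm{id}_{R'}$ in the range $1\le i\le n$ and with $\mathrm{id}_R\otimes\sigma'_{i-n}$, $\mathrm{id}_R\otimes\delta'_{i-n}$ in the range $n+1\le i\le n+m$, so their endomorphism and Leibniz properties are inherited from those of $\sigma_i,\delta_i,\sigma'_j,\delta'_j$ (the Leibniz calculation is the short bilinear manipulation already sketched in the excerpt and is the routine step). The one genuinely delicate point, and what I expect to be the main obstacle, is verifying that the coefficient $\overline{\sigma_i}(r\otimes s)=\sigma_i(r)\otimes s$ appearing in (iii) is nonzero whenever $r\otimes s\neq 0$, and dually that $\overline{\sigma_i}$ itself is injective: since $\overline{\sigma_i}=\sigma_i\otimes\mathrm{id}_{R'}$, this amounts to asking that tensoring the injection $\sigma_i\colon R\hookrightarrow R$ by $R'$ over $k$ preserves injectivity, which is automatic if $k$ is a field or, more generally, if $R'$ is $k$-flat (and symmetrically for the variables coming from $A'$). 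Under such a flatness hypothesis on the coefficient algebras, all four conditions of Definition \ref{def.pbwextension} hold and $A\otimes_k A'=\sigma(R\otimes_k R')\langle x_1\otimes 1,\dots,x_n\otimes 1,1\otimes y_1,\dots,1\otimes y_m\rangle$.
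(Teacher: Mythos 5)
Your proposal follows essentially the same route as the paper: verify the four axioms of Definition \ref{def.pbwextension} for $A\otimes_k A'$ with generators $x_i\otimes 1$ and $1\otimes y_j$, identify ${\rm Mon}(A\otimes_k A')$ with the ordered products of these generators, and transport the structure maps as $\overline{\sigma_i}=\sigma_i\otimes\mathrm{id}_{R'}$, $\overline{\delta_i}=\delta_i\otimes\mathrm{id}_{R'}$ (and symmetrically for the primed data), checking the Leibniz rule by the same bilinear computation. The one place you go beyond the paper is in observing that injectivity of $\sigma_i\otimes\mathrm{id}_{R'}$ is not automatic over a general commutative ring $k$ and requires $R'$ (resp.\ $R$) to be $k$-flat --- the paper simply asserts that $\overline{\sigma_i}$ is injective because $\sigma_i$ is --- but this is a legitimate refinement of the same argument rather than a different one.
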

\begin{remark}
Note that if $f$ is a nonzero element of $A$ and $g$ is a nonzero element of $A'$, then  ${\rm exp}(f\otimes g)=({\rm exp}(f), {\rm exp}(g))\in \mathbb{N}^{n+m}$, where ${\rm exp}(f\otimes g)$ is obtained using an order of elimination, either $A$ or $A'$.
\end{remark}
\section{Enveloping algebra}\label{envelopingalgebra} 
The {\em opposite} of a ring is the ring with the same elements and addition operation, but with
the multiplication performed in the reverse order. More precisely, the opposite of a ring $(B, +, \cdot)$ is the ring $(B, +, *)$,
whose multiplication $*$ is defined by $a* b = b \cdot a$. In this section we show that the enveloping algebra of a bijective skew PBW extension $A$ is again a PBW extension. We recall that if $B$ is a $k$-algebra, then  the \emph{enveloping algebra} of $B$ is  $B^e:=B\otimes_k B^{\rm op}$, where $B^{\rm op}$ is the opposite algebra of $B$. 
\begin{proposition}\label{opbijectiveprop}
If  $A$ is a bijective skew PBW extension over $R$, then $A^{\rm op}$ is a bijective skew PBW extension over $R^{\rm op}$. In fact, for  $A^{\rm op}$ we have the automorphisms $\sigma_i^{\rm op}:R^{\rm op}\to R^{\rm op}$ given by $\sigma^{\rm op}_i:=\sigma_i^{-1}(r)$, and the $\sigma_i^{\rm op}$-derivations $\delta^{\rm op}_i:R^{\rm op}\to R^{\rm op}$ defined by $\delta^{\rm op}_i(r):=-\delta_i(\sigma_i^{-1}(r))$, for every element $r\in R^{\rm op}$.
\begin{proof}
Let $A=\sigma(R)\langle x_1\dots, x_n\rangle$ be a bijective skew PBW extension of $R$. We will verify  the four conditions of the Definition \ref{def.pbwextension} for the rings $R^{\rm op}$ and $A^{\rm op}$.
 \begin{enumerate}
 \item[\rm(i)] It is clear that $R^{\rm op}\subseteq A^{\rm op}$.
 \item[\rm(ii)] Since $A$ is a left  free $R$-module with basis ${\rm Mon}(A):= \{x_1^{\alpha_1}\cdots
x_n^{\alpha_n}\mid  \alpha_1,\dots ,\alpha_n)\in \mathbb{N}^n\}$, then by the definition of the product in  $A^{\rm op}$, we have that $A^{\rm op}$ is a free right  $R$-module with basis the set ${\rm Mon}(A^{\rm op}):= \{x^{\alpha^{\rm op}}=x_n^{\alpha_n}\cdots
x_1^{\alpha_1}\mid \alpha^{\rm op}=(\alpha_n,\dots ,\alpha_1)\in
\mathbb{N}^n\}$. Hence, $A^{\rm op}$ is a left free $R^{\rm op}$-module.
\item[\rm(iii)] We will see that for each  $1\le i\le n$, and for every $r\in R^{\rm op}\ \backslash\ \{0\}$, there exists  $c'_{i,r}\in R^{\rm op}\ \backslash\ \{0\}$ such that $rx_i-x_ic'_{i,r}\in R^{\rm op}$. Put $c'_{i,r}:= \sigma^{-1}_i(r)$. Given that
\[
x_ic'_{i,r} = x_i\sigma_i^{-1}(r) = \sigma_i(\sigma_i^{-1}(r))x_i
+ \delta_i(\sigma_i^{-1}(r)) = rx_i + \delta_i(\sigma_i^{-1}(r)),
\]
we have that $rx_i-x_ic'_{i,r} = -\delta_i(\sigma_i^{-1}(r))\in R^{\rm op}$.

\item[\rm(iv)] Let $c'_{i,j}:=\sigma_i^{-1}(\sigma_j^{-1}(c_{i,j}^{-1}))$. Then 
\begin{align}
x_ix_j - x_jx_ic_{i,j}' = &\ x_ix_j - x_jx_i\sigma_i^{-1}(\sigma_j^{-1}(c_{i,j}^{-1}))\notag \\
= &\ x_ix_j - x_j[\sigma_i(\sigma_i^{-1}(\sigma_j^{-1}(c_{i,j}^{-1})))x_i + \delta_i(\sigma_i^{-1}(\sigma_j^{-1}(c_{i,j}^{-1})))]\notag \\
= &\ x_ix_j - x_j\sigma_j^{-1}(c_{i,j}^{-1})x_i - x_j\delta_i(\sigma_i^{-1}(\sigma_j^{-1}(c_{i,j}^{-1})))\notag \\
= &\ x_ix_j - [\sigma_j(\sigma_j^{-1}(c_{i,j}^{-1}))x_j + \delta_j(\sigma_j^{-1}(c_{i,j}^{-1}))]x_i - x_j\delta_i(\sigma_i^{-1}(\sigma_j^{-1}(c_{i,j}^{-1})))\notag \\
= &\ x_ix_j - c_{i,j}^{-1}x_jx_i - \delta_j(\sigma_j^{-1}(c_{i,j}^{-1}))x_i - x_j\delta_i(\sigma_i^{-1}(\sigma_j^{-1}(c_{i,j}^{-1})))\label{filof}
\end{align}
From Definition \ref{def.pbwextension} (iv),  we have that $x_jx_i - c_{i,j}x_ix_j = r^{(i,j)} + \sum_{l=1}^{n} r^{(i,j)}_{l}x_l$, whence $c_{i,j}^{-1}x_jx_i = x_ix_j + c_{i,j}^{-1}r^{(i,j)} + \sum_{l=1}^{n} c_{i,j}^{-1}r^{(i,j)}_{l}x_l$. So,  by replacing the term $c_{i,j}^{-1}x_jx_i$ in the above expression (\ref{filof}), we have that
\begin{align*}
x_ix_j - x_jx_ic_{i,j}' = &\ - c_{i,j}^{-1}r^{(i,j)} - \biggl( \sum_{l=1}^{n} c_{i,j}^{-1}r^{(i,j)}_{l}x_l\biggr) - \delta_j(\sigma_j^{-1}(c_{i,j}^{-1}))x_i - x_j\delta_i(\sigma_i^{-1}(\sigma_j^{-1}(c_{i,j}^{-1})))\\
= &\ -c_{i,j}^{-1}r^{(i,j)} - \biggl(\sum_{l=1}^{n} x_l\sigma_l^{-1}(c_{i,j}^{-1}r^{(i,j)}_{l}) - \delta_l(\sigma_l^{-1}(c_{i,j}^{-1}r^{(i,j)}_{l}))\biggr) \\
- &\ [x_i\sigma_i^{-1}(\delta_j(\sigma_j^{-1}(c_{i,j}^{-1}))) - \delta_i(\sigma_i^{-1}(\delta_j(\sigma_j^{-1}(c_{i,j}^{-1}))))] - x_j\delta_i(\sigma_i^{-1}(\sigma_j^{-1}(c_{i,j}^{-1})))\\
= &\ -c_{i,j}^{-1}r^{(i,j)} + \biggl(\sum_{l=1}^{n} \delta_l(\sigma^{-1}_l(c_{i,j}^{-1}r^{(i,j)}_{l}))\biggr) + \delta_i(\sigma_i^{-1}(\delta_j(\sigma_j^{-1}(c_{i,j}^{-1}))))\\
- &\ \sum_{l=1,\ l\neq i, j} x_l\sigma_l^{-1}(c_{i,j}^{-1}r_l^{(i,j)}) - x_i[\sigma_i^{-1}(c_{i,j}^{-1}r_i^{(i,j)}) + \sigma_i^{-1}(\delta_j(\sigma_j^{-1}(c_{i,j}^{-1})))]\\
- &\ x_j[\sigma_j^{-1}(c_{i,j}^{-1}r_j^{(i,j)}) + \delta_i(\sigma_i^{-1}(\sigma_j^{-1}(c_{i,j}^{-1})))],
\end{align*}
which shows that $x_ix_j-x_jx_ic_{i,j}'\in R + x_1R + \dotsb + x_nR$.
\end{enumerate}
Finally, let $r$ and $r'$ be elements of  $R^{\rm op}$. We have:
\begin{align*}
\sigma_i^{\rm op}(r+r') = &\ \sigma_{i}^{-1}(r+r') = \sigma_i^{-1}(r) + \sigma_i^{-1}(r') = \sigma_i^{\rm op}(r) + \sigma_i^{\rm op}(r')\\
\sigma_i^{\rm op}(1_{R^{\rm op}}) = &\ \sigma_i^{\rm op}(1_R) = \sigma_i^{-1}(1_R) = 1_R = 1_{R^{\rm op}} \\
\sigma_i^{\rm op}(rr') = &\ \sigma_i^{-1}(r'r) = \sigma_i^{-1}(r') \sigma_i^{-1}(r) = \sigma_i^{-1}(r)\sigma_i^{-1}(r') = \sigma_i^{\rm op}(r) \sigma_i^{\rm op}(r').
\end{align*}
Given that $\sigma_i$ is injective and surjective, so it is  $\sigma_i^{\rm op}$, for every $1\le i\le n$. With respect to the functions $\delta_i^{\rm op}$, we have
\begin{align*}
\delta_i^{\rm op}(r+r') =  &\ -\delta_i(\sigma_i^{-1}(r+r')) = -\delta_i(\sigma_i^{-1}(r) + \sigma_i^{-1}(r')) \\
= &\ -\delta_i(\sigma_i^{-1}(r)) - \delta_i(\sigma_i^{-1}(r')) \\
= &\ \delta_i^{\rm op}(r) + \delta_i^{\rm op}(r'),
\end{align*}
and using the product on $R^{\rm op}$, 
\begin{align*}
\delta_i^{\rm op}(rr') = &\ - \delta_i(\sigma_i^{-1}(r'r)) = -\delta_i(\sigma_i^{-1}(r') \sigma_i^{-1}(r))\\
= &\ - [\sigma_i(\sigma_i^{-1}(r'))\delta_i(\sigma_i^{-1}(r')) + \delta_i(\sigma_i^{-1}(r'))\sigma_i^{-1}(r)]\\
= &\ -r'\delta_i(\sigma_i^{-1}(r')) - \delta_i(\sigma_i^{-1}(r'))\sigma_i^{-1}(r)\\
= &\ \sigma_i^{-1}(r) (-\delta_i(\sigma_i^{-1}(r'))) + (-\delta_i(\sigma_i^{-1}(r)))r'\\
= &\ \sigma_i^{\rm op}(r)\delta_i^{\rm op}(r') + \delta_i^{\rm op}(r)r',
\end{align*}
which concludes the proof.
\end{proof}
\end{proposition}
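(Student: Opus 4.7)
The plan is to verify the four axioms of Definition \ref{def.pbwextension} for the pair $(R^{\rm op}, A^{\rm op})$ in order, and then check that the candidate maps $\sigma_i^{\rm op}$ and $\delta_i^{\rm op}$ are honest automorphisms and twisted derivations. Axiom (i) is immediate since $R$ and $R^{\rm op}$ agree as sets. For axiom (ii) bijectivity is crucial: by \cite{LezamaReyes2014}, Proposition 1.7, $A$ is right free over $R$ with basis ${\rm Mon}(A)$, so reversing each monomial gives a set ${\rm Mon}(A^{\rm op}) = \{x_n^{\alpha_n}\cdots x_1^{\alpha_1}\}$ that is a left $R^{\rm op}$-basis of $A^{\rm op}$ (left $R^{\rm op}$-multiplication equals right $R$-multiplication).

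For axioms (iii) and (iv) the central translation principle is that a relation $u*v - w*z \in R^{\rm op}+\sum_l R^{\rm op} x_l$ inside $A^{\rm op}$ corresponds to $vu - zw \in R+\sum_l x_l R$ inside $A$. Thus to verify (iii) I would set $c'_{i,r}:=\sigma_i^{-1}(r)$ and compute $rx_i - x_i\sigma_i^{-1}(r)$ in $A$ using the defining relation $x_is = \sigma_i(s)x_i+\delta_i(s)$ with $s=\sigma_i^{-1}(r)$; this collapses the $x_i$-term and leaves $-\delta_i(\sigma_i^{-1}(r))\in R^{\rm op}$, simultaneously verifying the axiom and reading off the formula for $\delta_i^{\rm op}(r)$. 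For axiom (iv), set $c'_{i,j}:=\sigma_i^{-1}\sigma_j^{-1}(c_{i,j}^{-1})$ (this uses invertibility of $c_{i,j}$, which comes from bijectivity) and expand $x_ix_j - x_jx_ic'_{i,j}$ by pushing the scalar $c'_{i,j}$ leftward through $x_i$ and then $x_j$ via the $\sigma$-$\delta$ relations. After the cascade of substitutions a $c_{i,j}^{-1}x_jx_i$ term emerges; replacing it with $x_ix_j + c_{i,j}^{-1}r^{(i,j)}+\sum_l c_{i,j}^{-1}r^{(i,j)}_l x_l$ cancels the leading $x_ix_j$ and produces a residue consisting of terms of the form (element of $R$) and (element of $R$)$\cdot x_l$.

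The main obstacle is precisely this final step of axiom (iv): one must rewrite every left-coefficient term (element of $R$)$\cdot x_l$ as $x_l\cdot$(element of $R$) plus an element of $R$, using the identity $Rx_l+R = x_lR+R$ guaranteed by bijectivity (again via \cite{LezamaReyes2014}, Proposition 1.7). Careful sign-tracking is needed because the $\sigma$-$\delta$ relations introduce $\delta$-remainders at each stage, and the goal is to show the entire residue lives in $R + x_1R+\cdots+x_nR$. Once (i)--(iv) are established, what remains is routine: additivity of $\sigma_i^{\rm op}$ and $\delta_i^{\rm op}$ is inherited from $\sigma_i$ and $\delta_i$; the multiplicative identity $\sigma_i^{\rm op}(r*r') = \sigma_i^{\rm op}(r)*\sigma_i^{\rm op}(r')$ in $R^{\rm op}$ becomes $\sigma_i^{-1}(r'r) = \sigma_i^{-1}(r')\sigma_i^{-1}(r)$, which follows from $\sigma_i$ being a homomorphism; the Leibniz rule for $\delta_i^{\rm op}$ reduces, after the sign flip and the argument swap forced by the opposite product, to the Leibniz rule for $\delta_i$. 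Finally, $A^{\rm op}$ is bijective because each $\sigma_i^{\rm op}=\sigma_i^{-1}$ is a bijection and each $c'_{i,j}$ is invertible in $R^{\rm op}$, being the image under two bijections of the invertible element $c_{i,j}^{-1}$.
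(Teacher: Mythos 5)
Your proposal is correct and follows essentially the same route as the paper: the same choices $c'_{i,r}=\sigma_i^{-1}(r)$ and $c'_{i,j}=\sigma_i^{-1}(\sigma_j^{-1}(c_{i,j}^{-1}))$, the same translation of the axioms to relations of the form $R+x_1R+\cdots+x_nR$ in $A$, and the same final verification that $\sigma_i^{\rm op}$ is an automorphism and $\delta_i^{\rm op}$ a $\sigma_i^{\rm op}$-derivation. If anything, you are slightly more explicit than the paper about where bijectivity enters in axiom (ii), via the right-freeness of $A_R$ from \cite{LezamaReyes2014}, Proposition 1.7.
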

\begin{remark}
We note also that $A^{\rm op}$ is a skew PBW extension over  $R$, where the elements of the  Definition \ref{def.pbwextension} are written in reverse order, and $\preceq^{\rm op}$ is the order given by $\alpha \preceq^{\rm op} \beta$ if and only if $\alpha^{\rm op}\le \beta^{\rm op}$. So, we can be that the set ${\rm Mon}(A^{\rm op})=\{x_n^{\alpha_n}\dotsb x_1^{\alpha_1}\mid \alpha^{\rm op}=(\alpha_n, \dotsc, \alpha_1)\in \mathbb{N}^{n}\}$ is a free $R$-basis of  $A^{\rm op}$.
\end{remark}

\begin{theorem}\label{envolventedeextension}
If $A$ is  a bijective skew PBW extension over $R$, then  $A^{\rm e}$ is a bijective skew PBW extension over $R^{\rm e}$.
\begin{proof}
First of all, it is clear that $R^{\rm e}\subseteq A^{\rm e}$. Secondly, given that  $A$ is a left free $R$-module, we have  that \[
A\cong R^{|{\rm Mon}(A)|}\cong (R\otimes_R R)^{|{\rm Mon}(A)|} \cong (R\otimes_{R} R^{\rm op})^{|{\rm Mon}(A)|}.
\]
Similarly, since that $A^{\rm op}$ is right $R^{\rm op}$-free,
\[
A^{\rm op}\cong (R^{\rm op})^{|{\rm Mon}(A^{\rm op})|} \cong (R^{\rm op} \otimes_{R^{\rm op}} R^{\rm op})^{|{\rm Mon}(A^{\rm op})|}\cong (R\otimes_R R^{\rm op})^{|\rm Mon(A^{\rm op})|},
\]
which shows that $A$ and $A^{\rm op}$ are left  free $R\otimes_R R^{\rm op}$-modules, so $A^{\rm e}=A\otimes A^{\rm op}$ is also a left free $R\otimes R^{\rm op}$-module with basis ${\rm Mon}(A)\otimes {\rm Mon}(A^{\rm op})$. Hence, 
\[
{\rm Mon}(A^{\rm e}) = \{(x_1\otimes 1)^{\alpha_1}\dotsb (x_n\otimes 1)^{\alpha_n} (1\otimes x_n)^{\alpha_{n+1}} \dotsb (1\otimes x_1)^{\alpha_{2n}}\mid (\alpha_1,\dotsc, \alpha_{2n})\in \mathbb{N}^{2n}\}.
\]
Note that the automorphisms $\overline{\sigma_i}$ and the  $\overline{\sigma_i}$-derivations of $A^{\rm e}$, for $1\le i\le n$, are given by
\[
\overline{\sigma_i}:R\otimes R^{\rm op}\to R\otimes R^{\rm op}, \ \ \ \ \
\overline{\sigma_i}(r\otimes r')=\begin{cases}
\sigma_i(r)\otimes r', &\ 1\le i\le n\\
r\otimes \sigma'_i(r), &\ n+1 \le i \le 2n,
\end{cases}
\]
and,
\[
\overline{\delta_i}:R\otimes R^{\rm op}\to R\otimes R^{\rm op},  \ \ \ \ \
\overline{\delta_i}(r\otimes r')=\begin{cases}
\delta_i(r)\otimes r', &\ 1\le i\le n\\
r\otimes \delta'_i(r'), &\ n+1 \le i \le 2n.
\end{cases}
\]
In this way, the conditions (iii) and (iv) of Definition \ref{def.pbwextension} follow from Propositions \ref{tensorproductsamering} and \ref{opbijectiveprop}, and Remark \ref{dogma}.
\end{proof}
\end{theorem}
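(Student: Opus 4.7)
The plan is to realize $A^{\rm e}$ as the tensor product of two bijective skew PBW extensions over distinct $k$-algebras, and then to read off the skew PBW datum on $A^{\rm e}$ from the two factors. This reduces the theorem to a direct application of two earlier results.

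First, I would apply Proposition \ref{opbijectiveprop} to conclude that $A^{\rm op}=\sigma(R^{\rm op})\langle x_n,\dotsc,x_1\rangle$ is itself a bijective skew PBW extension over $R^{\rm op}$, with the automorphisms $\sigma_i^{\rm op}=\sigma_i^{-1}$ and $\sigma_i^{\rm op}$-derivations $\delta_i^{\rm op}(r)=-\delta_i(\sigma_i^{-1}(r))$ produced there. Thus the problem reduces to studying the tensor product $A\otimes_k A^{\rm op}$ of two bijective skew PBW extensions over the $k$-algebras $R$ and $R^{\rm op}$.

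Second, I would invoke the tensor-product proposition for skew PBW extensions over possibly distinct $k$-algebras (the result proved just above this theorem), taking $A'=A^{\rm op}$ and $R'=R^{\rm op}$. This yields the four conditions of Definition \ref{def.pbwextension} for $A^{\rm e}$ over $R^{\rm e}=R\otimes_k R^{\rm op}$: the containment $R^{\rm e}\subseteq A^{\rm e}$ is immediate; the left $R^{\rm e}$-basis ${\rm Mon}(A^{\rm e})$ is obtained as the product of the bases of the two factors, giving the displayed monomials $(x_1\otimes 1)^{\alpha_1}\dotsb(x_n\otimes 1)^{\alpha_n}(1\otimes x_n)^{\alpha_{n+1}}\dotsb(1\otimes x_1)^{\alpha_{2n}}$; the scalar commutation relations and the PBW relations among the generators come from those of $A$ and of $A^{\rm op}$, together with the cross relations $(x_i\otimes 1)(1\otimes x_j)=(1\otimes x_j)(x_i\otimes 1)$ prescribed exactly as in Remark \ref{dogma}. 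The induced automorphisms $\overline{\sigma_i}$ and $\overline{\sigma_i}$-derivations $\overline{\delta_i}$ on $R^{\rm e}$ are then the componentwise operators displayed in the theorem statement.

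Third, I would confirm bijectivity of $A^{\rm e}$. Each $\overline{\sigma_i}$ is an automorphism of $R\otimes R^{\rm op}$ because it acts as the identity on one tensor factor and as a bijection ($\sigma_i$ or $\sigma_i^{\rm op}=\sigma_i^{-1}$) on the other; the structure constants appearing in condition (iv) for pairs of generators internal to $A$ or internal to $A^{\rm op}$ inherit invertibility from the bijectivity of $A$ and, via Proposition \ref{opbijectiveprop}, of $A^{\rm op}$, while the cross constants equal $1$ and are trivially invertible. I expect the main obstacle to be not any single deep step but the bookkeeping for the cross relations between $x_i\otimes 1$ and $1\otimes x_j$, together with verifying that the combined relations still give a genuine PBW basis over $R^{\rm e}$; this is precisely what the tensor-product proposition invoked in the second step settles once and for all, so the work here amounts to specializing that result to $A'=A^{\rm op}$, $R'=R^{\rm op}$ and checking bijectivity.
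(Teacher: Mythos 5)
Your proposal is correct and follows essentially the same route as the paper: both realize $A^{\rm e}$ as $A\otimes_k A^{\rm op}$, feed Proposition \ref{opbijectiveprop} into the tensor-product machinery of Section \ref{tensorproduct}, and read off the monomial basis, the maps $\overline{\sigma_i}$, $\overline{\delta_i}$, and the generator relations from the two factors. If anything, your citation of the tensor-product proposition for extensions over \emph{distinct} coefficient algebras is the more apt one (the paper instead invokes Proposition \ref{tensorproductsamering}, the same-ring version, together with Remark \ref{dogma}), and your explicit verification of bijectivity fills in a point the paper leaves implicit.
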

\section{Skew Calabi-Yau algebras}\label{application}
Suppose that $M$ and $N$ are both $B^e$-modules. Then there are
two $B^e$-module structures on $M\otimes N$. One of them is called the {\em outer structure} defined by
$(a\otimes b)\cdot  (m\otimes n)  \overset{\rm out}= am\otimes nb,$ and the other is called the {\em inner structure} defined by
$(a\otimes b)\cdot(m\otimes n) \overset{\rm int}= ma\otimes bn,$
for any $a, b \in B$, $m\in M$, $n\in  N$. Since $B^e$ is identified with $B\otimes B$ as a 
$\Bbbk$-module ($_\Bbbk B^e =\ _{\Bbbk}(B \otimes B^{op}) = {_{\Bbbk}(B \otimes B)}$), $B\otimes B$ endowed with the outer structure is nothing but the left regular $B^e$-module $B^e$. ${_{B^e} (B\otimes B)} \overset{\rm out}= {_{B^e}B^e}$: for ${_{B^e}(B\otimes B)}$,  $(a\otimes b)\cdot (x\otimes y)= a\cdot(x\otimes y)\cdot b \overset{\rm out}= ax \otimes yb$, whereas that in ${_{B^e}B^e}$ $(a\otimes b)\cdot (x\otimes y) = ax \otimes b\circ y= ax \otimes yb$. $B\otimes B$ endowed with the  inner structure is nothing but the  right regular $B^e$-module $B^e$. ${_{B^e}(B\otimes B)} \overset{\rm int}={B^e_{B^e}}$: for ${_{B^e}(B\otimes B)}$, $(a\otimes b)\cdot (x\otimes y)= a\cdot(x\otimes y)\cdot b \overset{\rm int}=xa \otimes by$, whereas that in ${B^e_{B^e}}$,  $(x\otimes y)\cdot (a\otimes b)= xa \otimes y\circ b= xa \otimes by$. Hence, we often say that $B^e$ has the outer (left) and inner (right) $B^e$-module structure.\\

\noindent An algebra $B$ is said to be \emph{homologically smooth} if as an $B^e$-module, $B$ has a finitely ge\-ne\-ra\-ted
projective resolution of finite length. The length of this resolution is known as the
\emph{Hochschild dimension} of $B$ (in \cite{ReyesSuarez2016c}, the authors considered this dimension to compute the cyclic homology of skew PBW extensions). In the next definition, the
outer structure on $B^e$ is used when computing the homology ${\rm Ext}^*_{B^e}(B, B^e)$. Thus,
${\rm Ext}^*_{B^e}(B, B^e)$ admits an $B^e$-module structure induced by the inner one on $B^e$.
\begin{definition}\label{def1.1} An algebra $B$ is called \emph{skew Calabi-Yau} of dimension $d$, if the following conditions hold:
\begin{enumerate}
\item[(i)] $B$ is homologically smooth.
\item[(ii)]There exists an algebra automorphism  $\nu$ of $B$ such that 
${\rm Ext}^i_{B^e} (B,B^e) \cong \begin{cases} 0, &\ i\neq d\\ B^{\nu}, &\ i=d, \end{cases}$, as $B^e$-modules.  If $\nu$ is the identity, then $B$ is said to be \emph{Calabi-Yau}.
\end{enumerate}
The automorphism $\nu$ is called the \emph{Nakayama} automorphism of $B$, and it is unique up to inner automorphisms of $B$. Note that a skew Calabi-Yau algebra is Calabi-Yau if and only if its Nakayama automorphism is inner. 
\end{definition}
\begin{definition}\label{ASregulardefinition}
Let $B=\Bbbk\oplus B_1\oplus B_2\oplus \cdots$ be a finitely presented graded algebra over a field $\Bbbk$. The algebra $B$ will be called \emph{AS-regular},  if it has the
following properties:
\begin{enumerate}
\item [(i)] $B$ has finite global dimension $d$, i.e., every graded $B$-module has projective
dimension less or equal than $d$; 
\item [(ii)] $B$ has finite Gelfand-Kirillov dimension; 
\item [(iii)] $B$ is \emph{Gorenstein}, meaning that ${\rm Ext}_B^i(\Bbbk, B) = 0$ if $i \neq d$, and ${\rm Ext}^d_A(\Bbbk, B)\cong \Bbbk$.
\end{enumerate}
\end{definition}
The dimension of Gelfand-Kirillov and the notion of Gorenstein for skew PBW extensions were studied in \cite{Reyes2013} and \cite{ReyesPhD}, respectively. Now, from the Definition \ref{ASregulardefinition} we can see that we need to consider graded algebras, and since in general skew PBW extensions are not graded rings, in the next definition we impose three conditions to guarantee a notion of {\em grade} in these extensions. More exactly,
\begin{definition}[\cite{Suarez}, Definition 2.6]\label{def. graded skew PBW ext} Let  $A=\sigma(R)\langle x_1,\dots, x_n\rangle$ be a bijective skew PBW extension over a  $\mathbb{N}$-graded algebra $R$. We said that $A$ is a \emph{graded  skew PBW extension}, if the following conditions holds:
\begin{enumerate}
\item[\rm (i)] the indeterminates $x_1,\dots, x_n$ have degree 1 in $A$; 
\item[\rm (ii)] $\sigma_i$ is a graded ring homomorphism and $\delta_i : R(-1) \to R$ is a graded $\sigma_i$-derivation, for all $1\leq i  \leq n$, where $\sigma_i$ and $\delta_i$ are established in Proposition \ref{sigmadefinition}; 
\item[\rm (iii)]  $x_jx_i-c_{i,j}x_ix_j\in R_2+R_1x_1 +\cdots + R_1x_n$, as in Definition \ref{def.pbwextension} (iv), and $c_{i,j}\in R_0$; 
\end{enumerate}
\end{definition}
For the next proposition, consider the notation established in Definition \ref{definitioncoefficients}. 
\begin{proposition}[\cite{Suarez}, Proposition 2.7]\label{prop.grad A}
Let $A$ be a graded skew PBW extension over $R$, and let $A_p$ the $\Bbbk$-space generated by the set $\{r_tx^{\alpha} \mid t+|\alpha|= p,\  r_t\in R_t \text{  and } x^{\alpha}\in {\rm Mon}(A)\}$, for $p\geq 0$. Then $A$ is a  graded algebra with graduation given by $A=\bigoplus_{p\geq 0} A_p$.
\end{proposition}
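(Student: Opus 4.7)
The goal is to show two things: (1) the sum $A=\sum_{p\ge 0} A_p$ is direct as a $\Bbbk$-vector space, and (2) $A_p\cdot A_q\subseteq A_{p+q}$ for all $p,q\ge 0$. The $\Bbbk$-algebra structure (with $1\in A_0$) is immediate from $1\in R_0$, so these two assertions complete the proof.

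\textbf{Step 1 (decomposition as vector spaces).} Since $A$ is left $R$-free with basis ${\rm Mon}(A)$ (Definition~\ref{def.pbwextension}), every $f\in A$ has a unique expression $f=\sum_\alpha r_\alpha x^\alpha$ with $r_\alpha\in R$. Using the $\mathbb{N}$-grading on $R$, decompose each coefficient uniquely as $r_\alpha=\sum_t r_{t,\alpha}$ with $r_{t,\alpha}\in R_t$, and group by total degree $p=t+|\alpha|$; this writes $f$ uniquely as a sum of elements of the $A_p$. Uniqueness at each stage forces $A=\bigoplus_{p\ge 0} A_p$ as $\Bbbk$-spaces.

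\textbf{Step 2 (multiplicative compatibility).} It suffices to check $(r_s x^\alpha)(r'_u x^\beta)\in A_{s+|\alpha|+u+|\beta|}$ for homogeneous generators. Write this product as $r_s\cdot(x^\alpha r'_u)\cdot x^\beta$; the strategy is to handle the two moves $x^\alpha r'_u$ and $x^\gamma x^\beta$ separately by induction and then combine.

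For the first move I will show by induction on $|\alpha|$ that $x^\alpha r'_u\in A_{u+|\alpha|}$. The base case $|\alpha|=1$, i.e.\ $x_i r'_u=\sigma_i(r'_u)x_i+\delta_i(r'_u)$, is exactly where condition~(ii) of Definition~\ref{def. graded skew PBW ext} bites: $\sigma_i$ graded gives $\sigma_i(r'_u)\in R_u$, so $\sigma_i(r'_u)x_i$ has total degree $u+1$, and $\delta_i$ being graded of degree $+1$ (as a map $R(-1)\to R$) gives $\delta_i(r'_u)\in R_{u+1}\subseteq A_{u+1}$. The inductive step follows by writing $x^\alpha=x_i x^{\alpha'}$ and applying the base case to the result of the inner induction, together with the fact that $R_{a}\cdot R_{b}\subseteq R_{a+b}$.

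For the second move I will show by induction on $|\alpha|+|\beta|$ (and, within that, on the standard-monomial order) that $x^\alpha x^\beta\in A_{|\alpha|+|\beta|}$. The only nontrivial reduction is commuting a pair $x_j x_i$ with $i<j$; here condition~(iii) of Definition~\ref{def. graded skew PBW ext} tells us that $x_j x_i=c_{i,j}x_ix_j+\rho$ with $c_{i,j}\in R_0$ and $\rho\in R_2+R_1 x_1+\cdots+R_1 x_n\subseteq A_2$, and since $c_{i,j}x_ix_j\in A_2$ as well the whole relation is homogeneous of degree $2$. Iterating this reduction (interleaved with Step~1's first move to push scalars past $x$'s) never changes the total degree, so any product $x^\alpha x^\beta$ lies in $A_{|\alpha|+|\beta|}$.

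Combining, $r_s\cdot(x^\alpha r'_u)\cdot x^\beta$ is, after the first induction, an $R$-linear combination of terms $r_s r''_{s_k} x^{\gamma_k} x^\beta$ with $s_k+|\gamma_k|=u+|\alpha|$; since $r_s r''_{s_k}\in R_{s+s_k}$ and $x^{\gamma_k}x^\beta\in A_{|\gamma_k|+|\beta|}$ by the second induction, each summand lies in $A_{s+|\alpha|+u+|\beta|}$, as required.

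\textbf{Main obstacle.} The delicate point is the bookkeeping in the second induction: when $x_j x_i$ is rewritten via (iii), the new terms $\rho$ contain $R_1 x_\ell$ pieces whose variables may need to be further commuted past the remaining factors of $x^\beta$, and each such move invokes the first induction and produces new coefficients via $\sigma$'s and $\delta$'s. One must argue that the total-degree invariant is preserved at every such step, which is precisely guaranteed by the three graded conditions of Definition~\ref{def. graded skew PBW ext}; the argument is really a simultaneous induction on $(|\alpha|+|\beta|,|\alpha|)$, and making the induction hypotheses mesh is the only place requiring care.
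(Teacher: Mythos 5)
Your argument is correct, and it is worth noting that this paper does not actually prove Proposition~\ref{prop.grad A}: it is imported verbatim from \cite{Suarez}, Proposition 2.7, so there is no in-paper proof to compare against. Your two steps (direct sum from the left $R$-freeness on ${\rm Mon}(A)$ plus the grading of $R$, and $A_pA_q\subseteq A_{p+q}$ via the three conditions of Definition~\ref{def. graded skew PBW ext}) are exactly the natural route, and match the proof in the cited reference, which organizes the same computation through Proposition~\ref{coefficientes}: one shows $\sigma^{\alpha}(r)\in R_t$ for $r\in R_t$, $c_{\alpha,\beta}\in R_0$, and that the lower-order terms $p_{\alpha,r}$ and $p_{\alpha,\beta}$ land in the correct homogeneous component. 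You correctly identify the only delicate point, namely that the two rewriting moves must be run as a simultaneous induction (commuting $x_jx_i$ produces coefficients in $R_0$ and $R_1$ that must then be pushed left, invoking the scalar move without changing total degree); with that caveat made explicit, the proof is complete.
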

Next theorem is one of the most important results of this paper. This theorem establishes that quasi-commutative skew PBW extensions over connected Calabi-Yau algebras over fields are skew-Calabi-Yau.
\begin{theorem}\label{teo.skew imp skew} 
If $A$ is a graded quasi-commutative skew PBW extension over a connected skew Calabi-Yau $\Bbbk$-algebra $R$, then $A$ is skew Calabi-Yau.
\end{theorem}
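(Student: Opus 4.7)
The strategy is to reduce the theorem to an induction on the number of generators of the skew PBW extension, by realising $A$ as an iterated graded Ore extension of $R$ of pure automorphism type, and then applying the well-known lifting of the skew Calabi--Yau property under such extensions. In outline: (a) present $A$ as $R[z_1;\theta_1][z_2;\theta_2]\cdots[z_n;\theta_n]$, where each $\theta_k$ is a graded $\Bbbk$-algebra automorphism of the preceding layer; (b) apply the Liu--Wang--Wu type theorem (see \cite{Reyes}) that $S[z;\theta]$ is connected graded skew Calabi--Yau whenever $S$ is and $\theta$ is a graded automorphism; (c) iterate from $R$ up to $A$.

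Step (a) is the structural core. Since $A$ is quasi-commutative (Definition \ref{sigmapbwderivationtype}(b)), every $\delta_i$ vanishes, so the defining relations collapse to $x_i r=\sigma_i(r)x_i$ with $\sigma_i$ a graded automorphism of $R$, together with $x_j x_i = c_{i,j}\, x_i x_j$ and $c_{i,j}\in R_0$ (Definition \ref{def. graded skew PBW ext}(iii)), which are invertible by bijectivity. Writing $A_k:=\sigma(R)\langle x_1,\dotsc,x_k\rangle$ (so $A_0=R$, $A_n=A$), each $A_k$ is again a graded quasi-commutative bijective skew PBW extension, and graded in the sense of Proposition \ref{prop.grad A}. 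Define $\theta_{k+1}\colon A_k\to A_k$ on generators by $\theta_{k+1}|_R=\sigma_{k+1}$ and $\theta_{k+1}(x_j)=c_{j,k+1}x_j$ for $1\le j\le k$. The quasi-commutation relations and invertibility of the $c_{i,j}$ make $\theta_{k+1}$ a graded $\Bbbk$-algebra automorphism of $A_k$, and the identity $x_{k+1}\cdot a = \theta_{k+1}(a)\cdot x_{k+1}$ for all $a\in A_k$ yields $A_{k+1}\cong A_k[x_{k+1};\theta_{k+1}]$ as graded algebras.

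For step (b), one invokes the following classical statement: if $S=\bigoplus_{p\ge 0}S_p$ is a connected graded skew Calabi--Yau $\Bbbk$-algebra of dimension $d$ with Nakayama automorphism $\mu_S$, and $\theta$ is a graded automorphism of $S$, then $S[z;\theta]$ (with $\deg z=1$) is connected graded skew Calabi--Yau of dimension $d+1$, with Nakayama automorphism extending $\mu_S\circ\theta$ on $S$ and scaling $z$ by an element of $\Bbbk^\times$. Combined with step (a) and the base case that $R=A_0$ is skew Calabi--Yau by hypothesis, a finite induction on $k$ delivers that $A_n=A$ is skew Calabi--Yau.

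The main obstacle is the verification in step (a) that the formulas for $\theta_{k+1}$ actually define a graded $\Bbbk$-algebra automorphism of $A_k$, and not merely a graded linear map on generators; this amounts to a cocycle-type compatibility check among the scalars $c_{i,j}$ for $i,j\le k$, forced by the associativity of multiplication in $A$ itself. Once this structural lemma is settled, propagation of connectedness along graded Ore extensions and the Ore-extension theorem for skew Calabi--Yau algebras finish the argument by a clean induction.
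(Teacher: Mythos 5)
Your proposal follows essentially the same route as the paper: both realise the quasi-commutative extension $A$ as an iterated graded Ore extension $R[z_1;\theta_1]\cdots[z_n;\theta_n]$ of automorphism type (the paper cites \cite{LezamaReyes2014}, Theorem 2.3 for this, where your ``cocycle-type compatibility'' concern is already settled) and then invoke the Liu--Wang--Wu preservation theorem \cite{Liu1} for graded Ore extensions. The only cosmetic difference is that the paper passes through the equivalence between connected graded skew Calabi--Yau and AS-regular (\cite{Reyes}, Lemma 1.2) rather than stating the Ore-extension step directly for the skew Calabi--Yau property, so your argument is correct and not a genuinely different proof.
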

\begin{proof}
Note that $A$ is  isomorphic to an  iterated Ore extension of endomorphism type \linebreak $R[z_1, \theta_1]\cdots[z_n, \theta_n]$ where $\theta_i$ is bijective; $\theta_1=\sigma_1$; $$\theta_j: R[z_1;\theta_1]\cdots[z_{j-1};\theta_{j-1}]\to R[z_1;\theta_1]\cdots[z_{j-1};\theta_{j-1}]$$ is such that $\theta_j(z_i)=c_{i,j}z_i$ ($c_{i,j}\in R$ as in Definition \ref{def.pbwextension}), $1 \leq i < j \leq n$ and $\theta_i(r)=\sigma_i(r)$, for $r\in R$ (\cite{LezamaReyes2014}, Theorem 2.3).  Since  $A$ is graded, then $\sigma_i$ is graded and $c_{i,j}\in R_0$. Now, using that $\theta_i(r)=\sigma_i(r)$ and $\theta_j(z_i)=c_{i,j}z_i$, we have that $\theta_i$ is a graded automorphism, for every  $i$.  Without loss of generality, we can assume that $z_i=x_i$, for every $1\le i\le n$. Therefore $A$ is isomorphic to a graded iterated Ore extension, and using that $R$ is AS-regular (\cite{Reyes}, Lemma 1.2), then $A$ is AS-regular (\cite{Liu1}). Now, $R$ connected implies that $A$ so is  (\cite{Suarez}, Remark 2.10). From \cite{Reyes}, Lemma 1.2, we conclude that $A$ is skew Calabi-Yau.
\end{proof}
\begin{example}\label{ex.skew CY} Theorem \ref{teo.skew imp skew} allows us to obtain the following examples of skew PBW extensions which are skew Calabi-Yau algebras.
\begin{enumerate}
\item \label{ex.q-dilat oper} For a fixed $q\ \in\ \Bbbk \setminus \{0\}$, the
$\Bbbk$-\emph{algebra of linear partial $q$-dilation operators} with polynomial coefficients  is $\Bbbk[t_1,\dots ,t_n][H^{(q)}_1, \dots, H^{(q)}_m ]$, $n \geq m$, subject to the relations:
$t_jt_i = t_it_j$, $1 \leq i < j \leq n$;\quad $H^{(q)}_i t_i = qt_iH^{(q)}_i$, $1 \leq i \leq m$;\quad
$H^{(q)}_j t_i = t_iH^{(q)}_j$, $i \neq j$;\quad
$H^{(q)}_j H^{(q)}_i = H^{(q)}_iH^{(q)}_j$, $1 \leq i < j\leq m$ (see \cite{LezamaReyes2014}, section 3.3). This algebra is a graded quasi-commutative skew PBW extension of $\Bbbk[t_1,\dots ,t_n]$, where $\Bbbk[t_1,\dots ,t_n]$ is endowed with usual graduation.
\item \label{ex.multipl Weyl} The \emph{quantum polynomial ring} $\mathcal{O}_n(\lambda_{ji})$  (also known as the {\em multiplicative analogue of the Weyl algebra}) is the algebra  generated by the indeterminates 
$x_1,\dots,x_n$ subject to the relations $x_jx_i
=\lambda_{ji}x_ix_j ,\ 1\leq i<j\leq n$, $\lambda_{ji}\in
\Bbbk\setminus \{0\}$. In \cite{LezamaReyes2014}, Section 3.5, it was proved that 
$\mathcal{O}_n(\lambda_{ji})\cong\sigma(\Bbbk)\langle
x_1,\dotsc,x_n\rangle\cong \sigma(\Bbbk[x_1])\langle
x_2,\dotsc,x_n\rangle$.
\end{enumerate}
\end{example}

\subsection*{Acknowledgment}
The first author is supported by Grant HERMES CODE 30366, Departamento de Matem\'aticas, Universidad Nacional de Colombia, Bogot\'a.

\end{document}